\def\R{\mathbb R}
\def\norm#1.#2.{\lVert#1\rVert_{#2}}
\def\Norm#1.#2.{\bigl\lVert#1\bigr\rVert_{#2}}
\def\NOrm#1.#2.{\Bigl\lVert#1\Bigr\rVert_{#2}}
\def\NORm#1.#2.{\biggl\lVert#1\biggr\rVert_{#2}}
\def\NORM#1.#2.{\Biggl\lVert#1\Biggr\rVert_{#2}}
\def\ip#1,#2,{\langle #1,#2\rangle}
\def\Ip#1,#2,{\bigl\langle#1,#2\bigr\rangle}
\def\IP#1,#2,{\Bigl\langle#1,#2\Bigr\rangle}
\def\XXint#1#2#3{{\setbox0=\hbox{$#1{#2#3}{\int}$}
     \vcenter{\hbox{$#2#3$}}\kern-.5\wd0}}
\newtheorem{thm}{Theorem}
\newtheorem{lem}{Lemma}
\theoremstyle{definition}
\newtheorem*{problem}{Problem}
\newtheorem{example}{Example}
\newtheorem{conjecture}{Conjecture}
\newtheorem*{remark}{Remark}
\begin{document}

\baselineskip=17pt

\title[Solyanik Estimates in Harmonic Analysis]{Solyanik Estimates in Harmonic Analysis}

\author{Paul Hagelstein}
\address{Department of Mathematics, Baylor University, Waco, Texas 76798}
\email{paul\!\hspace{.018in}\_\,hagelstein@baylor.edu}
\thanks{P. H. is partially supported by a grant from the Simons Foundation (\#208831 to Paul Hagelstein).}

\author{Ioannis Parissis}
\address{Department of Mathematics, Aalto University, P. O. Box 11100, FI-00076 Aalto, Finland}
\email{ioannis.parissis@gmail.com}
\thanks{I. P. is supported by the Academy of Finland, grant 138738.}

\subjclass[2010]{Primary 42B25}
\keywords{Halo function, maximal functions, Tauberian conditions}

\begin{abstract}
Let $\mathcal{B}$ denote a collection of open bounded sets in $\mathbb{R}^n$, and define the associated maximal operator $M_{\mathcal{B}}$ by
$$
M_{\mathcal{B}}f(x)  \coloneqq \sup_{x \in R \in \mathcal{B}} \frac{1}{|R|}\int_R |f|.
$$
The \emph{sharp Tauberian constant of $M_{\mathcal{B}}$ associated to $\alpha$}, denoted by $C_{\mathcal{B}}(\alpha)$, is defined as
$$
C_{\mathcal{B}}(\alpha) \coloneqq \sup_{E :\, 0 < |E| < \infty}\frac{1}{|E|}\big|\big\{x \in \mathbb{R}^n:\, M_{\mathcal{B}}\chi_E (x) > \alpha\big\}\big|.$$
Motivated by previous work of A. A. Solyanik, we show that if $M_{\mathcal{B}}$ is the uncentered Hardy-Littlewood maximal operator associated to balls, the estimate
$$
\lim_{\alpha \rightarrow 1^-}C_{\mathcal{B}}(\alpha) = 1
$$
holds.  Similar results for iterated maximal functions are obtained, and open problems in the field of Solyanik estimates are also discussed.
\end{abstract}

\maketitle

\section{Introduction} Let $\mathcal{B}$ denote a collection of open sets in $\mathbb{R}^n$, and define the associated geometric maximal operator $M_{\mathcal{B}}$ by
$$
M_{\mathcal{B}}f(x) \coloneqq \sup_{x \in R \in \mathcal{B}} \frac{1}{|R|}\int_R |f|.
$$

For some examples, if $\mathcal{B}$ were the collection of all cubes or balls in $\mathbb{R}^n$, $M_\mathcal{B}$ would be the uncentered Hardy-Littlewood maximal operator; while if $\mathcal{B}$ were the collection of all rectangles in $\mathbb{R}^n$ with sides parallel to the axes, $M_\mathcal{B}$ would be the strong maximal operator. Due to the importance of these  classical operators we adopt the special notation $M_{\operatorname {HL} } $ for the uncentered Hardy-Littlewood maximal operator and $M_{\operatorname S}$ for the strong maximal operator. To avoid confusion, we will sometimes let $M_{\operatorname {HL},b } $ denote the uncentered Hardy-Littlewood maximal operator $M_{\operatorname{HL}}$ with respect to balls,  and let $M_{\operatorname {HL} ,c} $ denote the uncentered Hardy-Littlewood maximal operator with respect to cubes.

We will also consider the \emph{centered} Hardy-Littlewood maximal operator defined as
$$
M_{\operatorname {HL},b } ^{\operatorname c} f(x) \coloneqq \sup_{r>0}\frac{1}{|B(x,r)|}\int_{B(x,r)} |f|,
$$
where $B(x,r)$ denotes the Euclidean ball of radius $r>0$, centered at $x\in\mathbb R^n$. A similar definition gives $M_{\operatorname {HL},c } ^{\operatorname c}$, defined with respect to centered cubes. Observe that, strictly speaking, these centered operators does not fall under the scope of our general definition for $M_{\mathcal B}$ as there is no collection $\mathcal B$ that will generate $M_{ \operatorname{HL},b } ^{\operatorname c}$ or $M_{ \operatorname{HL},c} ^{\operatorname c}$. This is essentially due to the centered nature of the sets defining $M_{\operatorname{HL}} ^{\operatorname c}$.

Given a collection $\mathcal{B}$ as above, we are typically interested in determining if the associated maximal operator $M_\mathcal{B}$ is bounded on $L^p(\mathbb{R}^n)$ for some \mbox{$1 < p < \infty$} and also what are  the optimal weak type $(p,p)$ estimates that $M_\mathcal{B}$ satisfies. For instance, it is well known that the uncentered Hardy-Littlewood maximal operator $M_{\operatorname{HL}}$ is bounded on $L^p(\mathbb{R}^n)$ for all $1 < p \leq \infty$ and that it satisfies the weak type $(1,1)$ estimate:
$$
| \{x\in\mathbb R^n:\, M_{\operatorname{HL}}f(x) > \alpha \} | \leq \frac{3^n}{\alpha} \|f \|_1.
$$

Even weaker conditions on geometric maximal operators are so-called \emph{Tauberian conditions}.   The maximal operator $M_{\mathcal{B}}$ is said to satisfy a \emph{Tauberian condition with respect to $\alpha\in(0,1)$} if there is some constant $C$ such that
$$
| \{x\in\mathbb R^n:\, M_{\mathcal{B}}\chi_E (x) > \alpha \}| \leq C|E|
$$
holds for all measurable sets $E$. Note that the previous condition is only supposed to hold for some \emph{fixed} $\alpha\in(0,1)$.  Now, if $M_{\mathcal{B}}$ is known to satisfy a weak type $(1,1)$ estimate or to be bounded on $L^{p}$ for some $1 < p < \infty$, then it is easily seen that $M_{\mathcal{B}}$ must satisfy a Tauberian condition with respect to $\alpha$, for \emph{all} $0 < \alpha < 1$.   However, a maximal operator $M_{\mathcal{B}}$ can in fact satisfy a Tauberian condition with respect to some $0 < \alpha < 1$ without being $L^p$ bounded for any finite $p$.   A quick example of this type of behavior can be exhibited by, say, letting $\mathcal{B}$ be the collection of all sets of the form $[0,1] \cup(x, x+2)$ and observing that, while $M_{\mathcal{B}}$ satisfies a Tauberian condition with respect to 4/5, it is not bounded on $L^p(\mathbb{R})$ for any $1 < p < \infty$.

A Tauberian condition on a maximal operator, although quite weak, is still very useful, as was shown by A. C\'ordoba and R. Fefferman in their work \cite{CorF} relating the $L^{p}$ bounds of certain multiplier operators to the weak type $\big((\frac{p}{2})', (\frac{p}{2})'\big)$ bounds of associated geometric maximal operators; see \cite{CorF} for  details. Moreover, Hagelstein and Stokolos have shown in \cite{hs} that, provided $\mathcal{B}$ is a homothecy invariant basis of convex sets in $\mathbb{R}^n$, if $\mathcal{B}$ satisfies a Tauberian condition with respect to \emph{some} $0 < \alpha < 1$, then $M_{\mathcal{B}}$ must be bounded on $L^p(\mathbb{R}^n)$ for sufficiently large $p$. This work has recently been extended by Hagelstein, Luque, and Parissis in \cite{hlp} to yield weighted $L^p$ bounds on  maximal operators satisfying a Tauberian condition with respect to a weighted basis.

The issue of \emph{sharp Tauberian constants} is one that has received very little attention until recently.  For specificity, given a maximal operator $M_{\mathcal{B}}$, we define the Tauberian constant $C_{\mathcal{B}}(\alpha)$ by

\begin{equation}\label{e.CBalpha}
C_{\mathcal{B}}(\alpha) \coloneqq \sup_{ E\subset \R^n  :\, 0 < |E| < \infty}\frac{1}{|E|} | \{x \in \mathbb{R}^n:\, M_{\mathcal{B}}\chi_E (x) > \alpha \} |.
\end{equation}

We note here that, in the relevant literature, the function $\phi_{\mathcal B}:[1,\infty)\to \mathbb R$ defined as
$\phi_{\mathcal B}(\lambda)\coloneqq C_{\mathcal{B}}(1/\lambda)$, $\lambda>1$, is many times called the \emph{Halo function} of the collection $\mathcal B$, as for example in \cite{Gu}. Obviously, it is equivalent to study the function $C_{\mathcal B} (\alpha)$ for $\alpha<1$ which is the setup we adopt in this paper.

We will use the special notation $C_{\operatorname {HL},b}$, $C_{\operatorname {HL},c}$ and $C_{\operatorname S}$ for the sharp Tauberian constants corresponding to the basis of balls, cubes, and  axes parallel rectangles, respectively. For the centered Hardy-Littlewood maximal operator we denote the corresponding sharp Tauberian constants by $C_{\operatorname{HL},b}  ^{\operatorname c}$ and $C_{{\operatorname{HL},c} } ^{\operatorname c}$.

Now, if the maximal operator $M_{\mathcal{B}}$ satisfies a weak type $(1,1)$ estimate
$$
 | \{x \in \mathbb{R}^n : \, M_{\mathcal{B}}f(x) > \alpha \} | \leq \frac{C}{\alpha} \|f \|_{1},
$$
then the associated sharp Tauberian constant $C_{\mathcal{B}}(\alpha)$ must satisfy
$$
C_{\mathcal{B}}(\alpha) \leq \frac{C}{\alpha}.
$$
However, we might expect in many situations $C_{\mathcal{B}}(\alpha)$ to be significantly smaller than $C/ \alpha$. For example, even though the weak type $(1,1)$ bound of the uncentered Hardy-Littlewood maximal operator $M_{\operatorname{HL}}$ acting on functions on $\mathbb{R}$ is 2, we would suspect it unlikely to find a set $E$ contained in $\mathbb{R}$ such that $ | \{x \in \mathbb{R} :\, M_{\operatorname{HL}}\chi_E(x) > .99 \} | = 2|E|$.   We will show momentarily that this indeed cannot be the case, and in fact that we must have
\[
\lim_{\alpha \rightarrow 1^-} C_{\operatorname{HL}}(\alpha)=\lim_{\alpha \rightarrow 1^-} \sup_{E \subset \mathbb{R}^n :\, 0<|E| < \infty}\frac{1}{|E|}\big|\big\{x\in\mathbb R^ n :\, M_{\operatorname{HL}}\chi_E(x) > \alpha\big\}\big| = 1 .
\]

The first estimates along the lines of the one above were obtained by \mbox{A. A.} Solyanik in \cite{Solyanik}.  In his honor, we call a result of the form
\[
\lim_{\alpha \rightarrow 1^-}C_{\mathcal{B}}(\alpha) = 1
\]
a \emph{Solyanik estimate}.

\begin{thm}[Solyanik]\label{t.solyanik} We have the following Solyanik estimates:
\[
\lim_{\alpha \rightarrow 1^-} C_{\operatorname{HL},c}(\alpha) =1\quad\text{and}\quad \lim_{\alpha \rightarrow 1^-} C_{\operatorname{S}}(\alpha) =1.
\]
In particular,
\[
 C_{\operatorname{HL},c}(\alpha)-1\sim_n \big(\frac{1}{\alpha}-1\big)^\frac{1}{n}\quad \text{and}\quad  C_{\operatorname{S}}(\alpha)-1\sim_n \big(\frac{1}{\alpha}-1\big)^\frac{1}{n}.
\]
For the sharp Tauberian constant of the centered Hardy-Littlewood maximal operator (with respect to cubes or balls) we have
\[
\lim_{\alpha \rightarrow 1^-} C_{\operatorname{HL},b}  ^{\operatorname c} (\alpha)=1\quad ;\quad\lim_{\alpha \rightarrow 1^-} C_{\operatorname{HL}, c}  ^{\operatorname c} (\alpha)=1
\]
and in particular
\[
C_{\operatorname{HL},b} ^{\operatorname c} (\alpha)-1\sim_n \frac{1}{\alpha}-1\quad;\quad C_{\operatorname{HL},c} ^{\operatorname c} (\alpha)-1\sim_n \frac{1}{\alpha}-1
\]
as $\alpha\to 1^{-}$.
\end{thm}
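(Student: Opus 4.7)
The proof splits into two regimes by geometry: the \emph{centered} case, where Besicovitch's covering theorem immediately delivers the linear rate $1/\alpha-1$, and the \emph{uncentered} case, where a more delicate covering argument is required to yield the fractional rate $(1/\alpha-1)^{1/n}$. The main obstacle is the refined covering lemma in the uncentered case: the standard Vitali covering introduces the fixed factor $3^n$ which destroys any Solyanik bound.

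For the centered Hardy-Littlewood estimates, fix $E \subset \mathbb{R}^n$ with $0 < |E| < \infty$ and set $U = \{M_{\operatorname{HL},b}^{\operatorname c}\chi_E > \alpha\}$. For each $x \in U$ select $r_x > 0$ so that $|B(x, r_x) \setminus E| < (1-\alpha) |B(x, r_x)|$, and apply the Besicovitch covering theorem to extract a countable subfamily $\{B_j\}$ covering $U$ with bounded overlap $\sum_j \chi_{B_j} \le N_n$. Then
\begin{align*}
|U \setminus E|
&\le \sum_j |B_j \setminus E| \le (1-\alpha) \sum_j |B_j| \\
&\le \frac{1-\alpha}{\alpha} \sum_j |B_j \cap E| \le \frac{N_n(1-\alpha)}{\alpha} |E|,
\end{align*}
where the last step uses bounded overlap to obtain $\sum_j |B_j \cap E| \le N_n |E|$. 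Since $|E| \le |U|$ by Lebesgue differentiation, this yields $C_{\operatorname{HL},b}^{\operatorname c}(\alpha) - 1 \le N_n(1/\alpha - 1)$; the same argument with axis-parallel cubes centered at $x$ gives the analogous bound for $C_{\operatorname{HL},c}^{\operatorname c}$.

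For the uncentered operator with cubes, the plan is to establish a Solyanik-type covering lemma: from $\{Q_x\}_{x\in U}$, each with $x \in Q_x$ and $|Q_x \cap E| > \alpha|Q_x|$, one extracts a pairwise disjoint subfamily $\{Q_j\}$ such that $\bigcup_j (1+c_n(1-\alpha)^{1/n})Q_j \supseteq U$. The heart of the proof is a greedy stopping-time selection ordered by side length (largest first); the density condition $|Q_x \cap E| > \alpha|Q_x|$ together with the greedy rule constrains how far an unselected cube can be offset from the nearest selected cube of larger or equal side, yielding the enlargement factor $1+c_n(1-\alpha)^{1/n}$ rather than the wasteful $3$ of Vitali. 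Once the covering lemma is secured, the chain
\[
|U| \le \bigl(1 + c_n(1-\alpha)^{1/n}\bigr)^n \sum_j |Q_j| \le \alpha^{-1}\bigl(1 + c_n(1-\alpha)^{1/n}\bigr)^n |E|
\]
gives $C_{\operatorname{HL},c}(\alpha) - 1 \lesssim_n (1/\alpha - 1)^{1/n}$. The strong maximal $M_{\operatorname S}$ admits an analogous treatment, either through a tensorized covering of axis-parallel rectangles or via the pointwise iteration $M_{\operatorname S}\chi_E \le M_{\operatorname{HL},c,1}\cdots M_{\operatorname{HL},c,n}\chi_E$, layering the 1D Solyanik bound through a geometric sequence of thresholds.

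Finally, matching lower bounds are obtained by direct computation of $|U|$ for explicit extremal sets $E$ — a single ball or cube in the centered case, and a suitably shaped configuration exploiting the positional freedom of non-centered cubes in the uncentered case — confirming the sharpness of the rates $1/\alpha - 1$ and $(1/\alpha - 1)^{1/n}$ in their respective settings.
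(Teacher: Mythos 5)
Your centered-operator argument is fine and is essentially Solyanik's: Besicovitch plus bounded overlap gives $C^{\operatorname c}_{\operatorname{HL},b}(\alpha)-1\le N_n(\tfrac1\alpha-1)$ exactly as you write (note the paper does not reprove Theorem~\ref{t.solyanik}; it cites \cite{Solyanik}, and the remark after the theorem confirms that the centered case rests on Besicovitch). But two of your steps have genuine gaps. First, and most seriously, the ``heart'' of your uncentered argument is false as stated: with a pairwise disjoint (Vitali-type) greedy selection, the density hypothesis does \emph{not} constrain how far a rejected cube can protrude from the selected cube it meets. If $E$ contains two unit cubes that overlap only in a thin sliver, both have $E$-density $1>\alpha$, one is selected, and the rejected one sticks out a distance comparable to its full side length, not $c_n(1-\alpha)^{1/n}$ times the side of the selected cube; so the claimed covering lemma $\bigcup_j(1+c_n(1-\alpha)^{1/n})Q_j\supseteq U$ has no proof. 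This is precisely why the C\'ordoba--Fefferman-type argument used in this paper (Theorem~\ref{un.solyanik}) replaces disjointness by an \emph{almost-disjoint} selection (accept a new ball only if at most a $(1-\delta)$-fraction is already covered), and even that device yields only the exponent $\tfrac{1}{n+1}$ for balls; Solyanik himself does not prove the uncentered cube case by a covering lemma for cubes but deduces it from the strong maximal estimate. Your sketch can be repaired without any covering lemma: since $M_{\operatorname{HL},c}\le M_{\operatorname S}\le M_1\cdots M_n$ pointwise, both cube and strong-maximal upper bounds follow from the iterated one-dimensional estimate, which is the route of Section~2 here (and Solyanik's); but to run that induction you need the one-dimensional bound for the perturbed function $\chi_E+\gamma\chi_{E^{\mathtt c}}$ (Lemma~\ref{l.oned}), not just for indicators, a point your ``geometric sequence of thresholds'' alludes to but does not supply.

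Second, your lower bounds for the centered operators cannot come from ``a single ball or cube.'' If $E$ is convex, every $x\notin \overline{E}$ satisfies $M^{\operatorname c}_{\operatorname{HL},b}\chi_E(x)\le \tfrac12$ (the separating hyperplane through the nearest boundary point leaves at most half of any ball centered at $x$ on the side of $E$), so for $\alpha>\tfrac12$ the superlevel set is just $E$ and you learn nothing about the rate. A working example is a punctured ball, $E=B(0,1)\setminus B(0,\rho)$ with $\rho\sim(1-\alpha)^{1/n}$: every point of the hole has a centered inscribed ball of $E$-density $>\alpha$, giving excess measure $\sim_n(\tfrac1\alpha-1)$, which matches the linear rate. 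For the uncentered cube and strong maximal lower bounds, the right configuration is the tilted slab met by a cube at angle $\pi/4$, as in Example~\ref{exa.slab}, which produces the $(\tfrac1\alpha-1)^{1/n}$ rate; your vague ``suitably shaped configuration'' needs to be made this concrete for the two-sided asymptotics claimed in the theorem.
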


\begin{remark} Note that Solyanik's theorem does not include an estimate for $C_{\operatorname{HL},b}$ associated to  the uncentered Hardy-Littlewood maximal operator with respect to balls, $M_{\operatorname{HL},b}$.   Indeed, Solyanik concludes the estimate for $C_{\operatorname{HL},c }$ as a corollary of estimate for $C_S$ and thus the methods in his paper do not readily apply to non-centered maximal operators defined with respect to balls. However, the method of Solyanik for centered maximal operators deals equally well with balls or cubes. This is because the basic underlying ingredient for these estimates in the case of centered operators is the Besicovitch covering lemma which works equally well for balls or cubes.
\end{remark}

We now introduce the directional maximal operator $M_{j}$, $j=1,\ldots,n$, acting on $\mathbb R^n$ and defined by
$$M_{j}f(x_1, \ldots, x_n) \coloneqq \sup_{s < x_j < t}\frac{1}{t-s}\int_{s}^{t}|f(x_1, \ldots, x_{j-1}, u, x_{j+1}, \ldots, x_n)|\,du.$$
In the next section we will prove a Solyanik estimate for the iterated maximal operator $M_1\cdots M_n$, namely that we have
$$
\lim_{\alpha \rightarrow 1^-} \sup_{E \subset \mathbb{R}^n :\, 0<|E| < \infty}\frac{1}{|E|}\big|\big\{x \in \mathbb{R}^n:\, M_1\cdots M_n\chi_E(x) > \alpha\big\}\big|  = 1.
$$
This will be done by proving a Solyanik estimate for $M_{\operatorname{HL}}$ on $\mathbb{R}^{1}$ by a  means different than Solyanik did in \cite{Solyanik} but one enabling us to afterwards apply induction to get the desired estimate for $M_{1}\cdots M_{n}$.   Subsequently we will provide a Solyanik estimate for the uncentered maximal operator $M_{\operatorname{HL}, b}$ by utilizing the circle of ideas developed by A. C\'ordoba and R. Fefferman in their work \cite{CorF75} relating covering lemmas to weak type bounds of geometric maximal operators.   Afterwards we will visit the issue of generalizing Solyanik estimates to encompass maximal operators $M_{\mathcal{B}}$ where $\mathcal{B}$ is a homothecy invariant collection of convex sets.  Throughout this paper we will indicate open problems and directions for further research.

\subsection*{Notation} We write $A\lesssim B$ whenever there is a numerical constant $c>0$ such that $A\leq c  B$. We also write $A\sim B$ if $A\lesssim B$ and $B\lesssim A$. If the constant $c$ depends for example on the dimension $n$ we will write $A\sim_n B$.

\section{Solyanik Estimates for Iterated Maximal Functions}

The key result in this section is the following lemma.

\begin{lem}\label{l.oned}
Let $M_{\operatorname{HL}}$ denote the uncentered Hardy-Littlewood maximal operator acting on functions on $\mathbb{R}$.    Let $E \subset \mathbb{R}$, where $|E| < \infty$, and let $0 \leq \gamma < \alpha < 1$.   Then
\begin{equation}\label{e1}
 | \{x \in \mathbb{R} :\, M_{\operatorname{HL}}(\chi_{E} + \gamma \chi_{E^\mathtt{c}})(x) > \alpha \} | \leq \bigg(1 + 4 \frac{1-\alpha}{\alpha - \gamma}\bigg) |E |.
\end{equation}
\end{lem}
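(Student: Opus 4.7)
The plan is to reduce the two-sided estimate to the one-sided uncentered maximal operators on $\mathbb{R}$, where the Riesz rising-sun lemma delivers an exact balance identity on each maximal level-set component and avoids the slack that a purely Vitali covering argument would introduce when $\beta$ is close to one.

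Set $f := \chi_E + \gamma\chi_{E^\mathtt{c}}$, let $\beta := (\alpha - \gamma)/(1 - \gamma) \in (0,1)$, and introduce the one-sided maximal functions $M^+ f(x) := \sup_{h > 0}\tfrac{1}{h}\int_{x}^{x+h} f$ and $M^- f(x) := \sup_{h > 0}\tfrac{1}{h}\int_{x-h}^{x} f$. Writing any two-sided average $\tfrac{1}{t-s}\int_s^t f$ with $s < x < t$ as a convex combination of the two one-sided averages based at $x$ shows that $M_{\operatorname{HL}}f(x) = \max\bigl(M^+ f(x), M^- f(x)\bigr)$, and hence $U := \{M_{\operatorname{HL}}f > \alpha\} = U^+ \cup U^-$ with $U^{\pm} := \{M^{\pm} f > \alpha\}$.

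Next I apply the Riesz rising-sun lemma to the primitive $G(x) := \int_0^x f - \alpha x$. Because $|E| < \infty$ and $f \equiv \gamma < \alpha$ off a bounded set, $G$ is eventually linear with negative slope in both directions, so $G(x) \to -\infty$ as $|x| \to \infty$ and the standard Riesz decomposition applies: every maximal component $(a_j, b_j)$ of $U^+$ (and likewise of $U^-$) is a bounded interval on which the balance identity $\int_{a_j}^{b_j} f = \alpha (b_j - a_j)$ holds exactly. Expanding $f$ rewrites this as $|E \cap (a_j, b_j)| = \beta (b_j - a_j)$, and summing over $j$ gives
\[
|U^{\pm}| \;=\; \frac{|E \cap U^{\pm}|}{\beta} \;\leq\; \frac{|E|}{\beta}.
\]

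The improvement over a plain union bound is provided by the observation that $E \subseteq U^+ \cap U^-$ up to a null set: at each Lebesgue density point $x$ of $E$ one has $\tfrac{1}{h}\int_x^{x+h} f \to 1 > \alpha$ as $h \to 0^+$, and symmetrically on the left. Inclusion--exclusion then delivers
\[
|U| \;\leq\; |U^+| + |U^-| - |U^+ \cap U^-| \;\leq\; \frac{2|E|}{\beta} - |E| \;=\; \frac{2-\beta}{\beta}|E|,
\]
and the elementary verification $(2-\beta)/\beta = (2-\alpha-\gamma)/(\alpha-\gamma) \leq 1 + 4(1-\alpha)/(\alpha-\gamma)$, valid whenever $\alpha < 1$, yields \eqref{e1}. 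The main technical point I anticipate is securing the rising-sun balance with equality at the endpoints; this is exactly why the reduction to the one-sided operators is essential, since the two-sided maximal function by itself only produces the inequality version of this identity on its level sets.
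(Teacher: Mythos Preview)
Your argument is correct and gives a genuinely different (and sharper) proof than the paper's.  The paper covers the level set $\{M_{\operatorname{HL}}f>\alpha\}$ by intervals with average $>\alpha$, passes to a subfamily with pointwise overlap at most~$2$, observes that each surviving interval satisfies $|E\cap \tilde I_j|/|\tilde I_j|>\beta$, and then feeds this back into the weak $(1,1)$ bound for $M_{\operatorname{HL}}\chi_E$; the overlap~$2$ together with the weak-type constant~$2$ produces the factor~$4$.  Your reduction to the one-sided operators $M^\pm$ replaces this two-step covering by the exact rising-sun identity $|E\cap U^\pm|=\beta\,|U^\pm|$, and the inclusion--exclusion with $E\subset U^+\cap U^-$ a.e.\ recovers the overlap savings in a cleaner way.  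The payoff is that your bound is in fact
\[
\frac{2-\beta}{\beta}\;=\;1+2\,\frac{1-\alpha}{\alpha-\gamma},
\]
which is strictly better than the stated $1+4(1-\alpha)/(\alpha-\gamma)$ for every $\alpha<1$.

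One small slip to repair: the assertion that ``$f\equiv\gamma$ off a bounded set'' and hence ``$G(x)\to-\infty$ as $|x|\to\infty$'' is not justified, since $E$ is only assumed to have finite measure, not to be bounded.  In fact $G(x)=(\gamma-\alpha)x+(1-\gamma)\,|E\cap(0,x)|$ tends to $-\infty$ as $x\to+\infty$ but to $+\infty$ as $x\to-\infty$.  This does not damage the proof: what you actually need is that every connected component of $U^+$ (and of $U^-$) is a \emph{bounded} interval, and that follows at once from $|U^\pm|=|\{M^\pm\chi_E>\beta\}|\le |E|/\beta<\infty$ by the sharp weak $(1,1)$ inequality for the one-sided maximal operator.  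With that in hand, the rising-sun balance $G(a_j)=G(b_j)$ on each component is the standard Riesz lemma and your computation goes through verbatim.
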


\begin{proof} We first prove the lemma for $\gamma>0$. Let $f_{E, \gamma}$ be the function defined on $\mathbb{R}$ by
$$
f_{E, \gamma}(x) = \chi_E(x) + \gamma\chi_{E^\mathtt{c}}(x) .
$$
Let $ \{I_{j} \}$ be a countable collection of intervals such that
$$
\{x  \in \mathbb{R}:\, M_{\operatorname{HL}}(\chi_{E} + \gamma \chi_{E^\mathtt{c}})(x) > \alpha \} = \cup_j I_j
$$
and such that, for each $j$,
$$
\frac{1}{|I_j|}\int_{I_j}f_{E, \gamma}  >  \alpha .
$$
We now fix some $\epsilon > 0$. As $M_{\operatorname{HL}}$ is of weak type $(1,1)$ we must have that
$$
\{x  \in \mathbb{R}: \, M_{\operatorname{HL}}(\chi_{E} + \gamma \chi_{E^\mathtt{c}})(x) > \alpha\}
$$
is of finite measure. Indeed, if $ M_{\operatorname{HL}}(\chi_{E} + \gamma \chi_{E^\mathtt{c}})(x) > \alpha$ then we must have $M_{\operatorname{HL}}\chi_E(x) > \alpha - \gamma$. Accordingly there exists a finite subcollection $\{I_{j}' \}$ of $ \{I_{j} \}$ such that
$$
| \{x  \in \mathbb{R} :\, M_{\operatorname{HL}}(\chi_{E} + \gamma \chi_{E^\mathtt{c}})(x) > \alpha \} \setminus \cup_j I_{j}' | < \epsilon .
$$
Arguing as in \cite{baf}*{p. 24} we see that there exists a collection of intervals $\{\tilde{I}_j\}_j$ contained in $ \{I'_j \}_j$ such that
$\cup_j \tilde{I}_j = \cup_j I'_j$ and  $\sum_j \chi_{\tilde{I}_j} \leq 2$. Since $\frac{1}{|\tilde{I}_j|} \int_{\tilde{I}_j}f_{E, \gamma} > \alpha$, we have
$$
|E \cap \tilde{I}_j| + \gamma|\tilde{I}_j \setminus E| > \alpha |\tilde{I}_j|,
$$
implying
$$
\frac{|E \cap \tilde{I}_{j}|}{|\tilde{I}_{j}|} > \frac{\alpha - \gamma}{1 - \gamma}.
$$
So
\[
\begin{split}
\big|\big\{x \in \mathbb{R} :\, M_{\operatorname{HL}}f_{E, \gamma}(x) > \alpha\big\}\big| &\leq |E| + \frac{1 - \alpha}{1 - \gamma}\sum|\tilde{I}_j| + \epsilon \notag
\\
&\leq |E| + 2 \frac{1 - \alpha}{1 - \gamma}|\cup \tilde{I}_j| + \epsilon.
\end{split}
\]
As we have shown that $\frac{1}{|\tilde{I}_j|}\int_{\tilde{I}_j} f_{E, \gamma} > \alpha$ implies
$$
\frac{|E \cap \tilde{I}_j|}{|\tilde{I}_{j}|} > \frac{\alpha - \gamma}{1 - \gamma},
$$
we have
$$
\cup_j \tilde{I}_{j} \subset \bigg\{x  \in \mathbb{R}:\, M_{\operatorname{HL}}\chi_{E}(x) > \frac{\alpha - \gamma}{1 - \gamma}\bigg\}.
$$
So by the weak type $(1,1)$ bound of 2 of $M_{\operatorname{HL}}$ on $\mathbb{R}^{1}$, we have
$$
|\cup \tilde{I}_{j}| \leq 2 \frac{1 - \gamma}{\alpha - \gamma}|E|
$$
and accordingly
$$
\big|\big\{x  \in \mathbb{R}:\, M_{\operatorname{HL}}f_{E, \gamma}(x) > \alpha\big\}\big| \leq \bigg(1 + 4\frac{1 - \alpha}{\alpha - \gamma}\bigg)|E| + \epsilon.$$
As $\epsilon > 0$ was arbitrary we obtain the desired result in the case $\gamma>0$.

Now observe that for any $\alpha,\delta>0$ we have
$$
|\{x\in \mathbb R:\, M_{\operatorname{HL}}(\chi_E)>\alpha \}|\leq |\{x\in\mathbb R:\, M_{\operatorname{HL}}f_{E,\delta}>\alpha\}|\leq  \bigg(1 + 4\frac{1 - \alpha}{\alpha - \delta}\bigg)|E|
$$
by the case already proved. Since the left hand side of the estimate above does not depend on $\delta$ we can let $\delta\to 0^+$ to get the lemma for $\gamma=0$ as well.
\end{proof}

We now iterate the above estimate to yield a Solyanik estimate for the iterated maximal operator $M_{1}\cdots M_{n}$.

\begin{lem}\label{l.iterated}
Setting $\alpha_{0} = 0$ and  $0 < \alpha_{1} < 1$, define $\alpha_j$, $j= 2, 3, 4, \ldots, n$ by
$$
\alpha_j = 1 -  (1 - \alpha_1 )^j .
$$
Then
$$
\big|\big\{x \in \mathbb{R}^{n} :\, M_{1}\cdots M_{n}\chi_E(x) > \alpha_n\big\}\big| \leq \bigg(1 + 4\frac{1 - \alpha_1}{\alpha_1}\bigg)^n |E |$$
holds for every measurable set $E$ in $\mathbb{R}^{n}$.
\end{lem}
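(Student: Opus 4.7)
The natural plan is to induct on $n$, slicing one coordinate at a time and applying Lemma \ref{l.oned} on each slice. The arithmetic progression $\alpha_j=1-(1-\alpha_1)^j$ is engineered so that the quantity $\frac{1-\alpha_{j+1}}{\alpha_{j+1}-\alpha_j}$ is invariant of $j$: indeed $1-\alpha_{j+1}=(1-\alpha_1)^{j+1}$ and $\alpha_{j+1}-\alpha_j=(1-\alpha_1)^j-(1-\alpha_1)^{j+1}=\alpha_1(1-\alpha_1)^j$, so the ratio equals $\frac{1-\alpha_1}{\alpha_1}$. Consequently each application of Lemma \ref{l.oned} will cost exactly the \emph{same} factor $1+4\tfrac{1-\alpha_1}{\alpha_1}$, and after $n$ applications we collect the product announced in the statement.

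To implement this I would define the partial iterates $g_k\coloneqq M_{k}M_{k+1}\cdots M_n\chi_E$ (with $g_{n+1}\coloneqq\chi_E$) and the super-level sets $F_k\coloneqq\{x\in\mathbb R^n:\, g_k(x)>\alpha_{n-k+1}\}$, and prove by downward induction on $k\in\{n+1,n,\ldots,1\}$ the estimate
\[
|F_k|\leq \bigg(1+4\frac{1-\alpha_1}{\alpha_1}\bigg)^{n-k+1}|E|,
\]
with the convention $F_{n+1}=E$. The base case $k=n+1$ is trivial. For the inductive step, note that by the definition of the one-dimensional maximal operator we always have $0\leq g_{k+1}\leq 1$, and by the inductive hypothesis combined with the pointwise bound $g_{k+1}\leq 1$ we can write
\[
g_{k+1}\leq \chi_{F_{k+1}}+\alpha_{n-k}\chi_{F_{k+1}^{\mathtt c}}.
\]
This is the crucial algebraic input: it converts the inductive information into a function of the form treated by Lemma \ref{l.oned}.

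Now I would freeze the remaining $n-1$ coordinates and apply $M_k$ only in the $k$-th variable. On each slice, Lemma \ref{l.oned}, used with $\alpha=\alpha_{n-k+1}$ and $\gamma=\alpha_{n-k}$, yields
\[
\big|\{x_k:\, M_k g_{k+1}(\ldots,x_k,\ldots)>\alpha_{n-k+1}\}\big|\leq \bigg(1+4\frac{1-\alpha_{n-k+1}}{\alpha_{n-k+1}-\alpha_{n-k}}\bigg)|F_{k+1}^{\text{slice}}|,
\]
and by the invariance computation above the parenthesis collapses to $1+4\frac{1-\alpha_1}{\alpha_1}$. Integrating in the remaining $n-1$ variables via Fubini and using the inductive bound on $|F_{k+1}|$ produces the desired control on $|F_k|$. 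Taking $k=1$ gives exactly the statement of the lemma.

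The argument is almost entirely bookkeeping; the only genuine step is recognising that $g_{k+1}\leq \chi_{F_{k+1}}+\alpha_{n-k}\chi_{F_{k+1}^{\mathtt c}}$ so that Lemma \ref{l.oned} applies, and that the choice of $\alpha_j$ exactly matches the Tauberian constant appearing in that lemma. I do not anticipate any real obstacle beyond keeping the indices consistent and verifying that the measurability requirements for Fubini are satisfied, which is routine since $M_kg_{k+1}$ is lower semicontinuous in each variable.
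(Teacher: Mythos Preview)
Your proposal is correct and follows essentially the same route as the paper's proof: both identify the invariance $\frac{1-\alpha_{j+1}}{\alpha_{j+1}-\alpha_j}=\frac{1-\alpha_1}{\alpha_1}$, dominate the partial iterate by $\chi_{F}+\alpha_j\chi_{F^{\mathtt c}}$, and feed this into Lemma~\ref{l.oned} slice by slice via Fubini. The only cosmetic difference is that the paper inducts upward on the number of operators (bounding $M_{j+1}M_1\cdots M_j$ and then invoking a symmetry/relabeling argument to pass to $M_1\cdots M_{j+1}$), whereas your downward induction on $k$ builds $M_1\cdots M_n$ directly from the outside in and so avoids that symmetry step; otherwise the arguments are identical.
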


\begin{proof}
We proceed by proving
$$
 | \{x \in \mathbb{R}^{n} : \, M_{1}\cdots M_{N}\chi_E(x) > \alpha_N \} | \leq \bigg(1 + 4\frac{1 - \alpha_1}{\alpha_1}\bigg)^N |E | , \quad  N = 1, \ldots, n,
$$
by induction on $N$.  Note
$$
 | \{x \in \mathbb{R}^n : \, M_{1}\chi_E(x) > \alpha_1 \} | \leq \bigg(1 + 4 \frac{1 - \alpha_1}{\alpha_1}\bigg)|E|
$$
holds by Lemma~\ref{l.oned}, seen by setting $\alpha = \alpha_1$, $\gamma = 0$.

Suppose now
$$
 | \{x \in \mathbb{R}^{n} : \, M_1 \cdots M_j \chi_{E}(x) > \alpha_j \} | \leq \bigg(1 + 4\frac{1 - \alpha_1}{\alpha_1}\bigg)^j |E |.
$$
Let
$$
E_j \coloneqq \{x \in \mathbb{R}^n :\, M_1 \cdots M_j\chi_E(x) > \alpha_j \}.
$$
Observe that the $\alpha_j$ satisfy
$$
\frac{1 - \alpha_{j+1}}{\alpha_{j+1} - \alpha_j} = \frac{1 - \alpha_j}{\alpha_j - \alpha_{j-1}},
$$
implying
$$
\frac{1 - \alpha_{j+1}}{\alpha_{j+1} - \alpha_j} = \frac{1 - \alpha_j}{\alpha_j - \alpha_{j-1}} = \cdots = \frac{1 - \alpha_1}{\alpha_1}.
$$
Also, for any $j$ we have
\[
\begin{split}
M_{j+1}M_{1}\cdots M_j\chi_E (x) &=M_{j+1}(\chi_{E_j}M_{1}\cdots M_j\chi_E+\chi_{E_j ^\mathtt{c}} M_{1}\cdots M_j\chi_E)(x)
\\
& \leq M_{j+1}(\chi_{E_j} + \alpha_j \chi_{E_j ^\mathtt{c}})(x).
\end{split}
\]
Hence
\begin{align*}
&\big|\big\{x  \in \mathbb{R}^{n} :\, M_{j+1}M_{1}\cdots M_j\chi_E (x) > \alpha_{j+1}\big\}\big|
\\
&\leq \big|\big\{x  \in \mathbb{R}^{n} :\, M_{j+1} (\chi_{E_j} + \alpha_j \chi_{E_{j}^\mathtt{c}} )(x) > \alpha_{j+1}\big\}\big|
\\
&\leq \bigg(1 + 4\frac{1 - \alpha_{j+1}}{\alpha_{j+1} - \alpha_j}\bigg)|E_j|\quad\quad\text{(by Lemma~\ref{l.oned})}
\\
&\leq \bigg(1 + 4\frac{1 - \alpha_1}{\alpha_1}\bigg)\bigg(1 + 4\frac{1 - \alpha_1}{\alpha_1}\bigg)^{j}|E|
\\
&\leq \bigg(1 + 4\frac{1 - \alpha_{1}}{\alpha_1}\bigg)^{j+1}|E|.
\end{align*}
Since this holds for every measurable set $E$ in $\mathbb{R}^n$, by symmetry we have
$$
\big|\big\{x \in \mathbb{R}^n :\, M_{1}\cdots M_{j+1}\chi_E(x) > \alpha_{j+1}\big\}\big| \leq \bigg(1 + 4\frac{1 - \alpha_1}{\alpha_1}\bigg)^{j+1}|E|.
$$
Setting $j = n - 1$ yields the desired result.
\end{proof}

\begin{thm}\label{t.iterated}
Let $0 < \alpha < 1$.  Then
$$
\big|\big\{x \in \mathbb{R}^n :\, M_1\cdots M_n\chi_E(x) > \alpha \big\}\big| \leq \bigg(1 + 4 \frac{(1 - \alpha)^{1/n}}{1 - (1 - \alpha)^{1/n}}\bigg)^n |E| .
$$
Accordingly, letting $C_{1\cdots n}(\alpha)$ denote the sharp Tauberian constant with respect to $\alpha$ of $M_1\cdots M_n$, we have
$$ C_{1\cdots n}(\alpha) - 1 \sim_n (\frac{1}{\alpha} - 1)^{1/n} .$$
\end{thm}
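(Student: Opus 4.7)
The plan is to derive Theorem~\ref{t.iterated} as a calibrated consequence of Lemma~\ref{l.iterated}. Given the target threshold $\alpha\in(0,1)$, I would solve for the starting parameter $\alpha_1$ in the lemma so that the induced terminal parameter $\alpha_n=1-(1-\alpha_1)^n$ equals $\alpha$; this forces $\alpha_1=1-(1-\alpha)^{1/n}$. Substituting into the conclusion of Lemma~\ref{l.iterated} immediately produces
$$\big|\big\{x\in\mathbb{R}^n:\,M_1\cdots M_n\chi_E(x)>\alpha\big\}\big|\leq\bigg(1+4\frac{(1-\alpha)^{1/n}}{1-(1-\alpha)^{1/n}}\bigg)^n|E|,$$
which is the explicit inequality displayed in the theorem, yielding the bound $C_{1\cdots n}(\alpha)\leq(1+4(1-\alpha)^{1/n}/(1-(1-\alpha)^{1/n}))^n$. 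To extract the upper half of the asymptotic $\sim_n$, set $\varepsilon=(1-\alpha)^{1/n}$ and expand $(1+4\varepsilon/(1-\varepsilon))^n-1=4n\varepsilon+O_n(\varepsilon^2)$ as $\varepsilon\to 0^+$; since $(1-\alpha)^{1/n}\sim_n(1/\alpha-1)^{1/n}$ in the limit $\alpha\to 1^-$, this gives $C_{1\cdots n}(\alpha)-1\lesssim_n(1/\alpha-1)^{1/n}$.

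For the matching lower direction I would compare $M_1\cdots M_n$ pointwise with the strong maximal operator $M_{\operatorname S}$. For any axis-parallel rectangle $R=I_1\times\cdots\times I_n$ with $x\in R$, iteratively replacing each one-dimensional average by the corresponding one-dimensional maximal function via Fubini gives $|E\cap R|/|R|\leq M_1\cdots M_n\chi_E(x)$, so $M_{\operatorname S}\chi_E\leq M_1\cdots M_n\chi_E$ everywhere. Consequently $C_{\operatorname S}(\alpha)\leq C_{1\cdots n}(\alpha)$, and Theorem~\ref{t.solyanik} supplies $C_{\operatorname S}(\alpha)-1\sim_n(1/\alpha-1)^{1/n}$, giving the lower estimate $C_{1\cdots n}(\alpha)-1\gtrsim_n(1/\alpha-1)^{1/n}$.

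The argument is essentially bookkeeping once Lemma~\ref{l.iterated} is in hand, so no step is genuinely difficult. The only subtle point is that the explicit constant inequality only supplies the upper half of the asymptotic $\sim_n$; extracting the lower half requires noticing the pointwise domination $M_{\operatorname S}\chi_E\leq M_1\cdots M_n\chi_E$, after which one plugs into Solyanik's sharp lower bound for the strong maximal operator. I anticipate no further obstacle beyond this short comparison and the elementary expansion of $(1+4\varepsilon/(1-\varepsilon))^n$ near $\varepsilon=0$.
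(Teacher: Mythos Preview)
Your derivation of the displayed inequality is identical to the paper's: set $\alpha_n=\alpha$, invert $\alpha=1-(1-\alpha_1)^n$ to get $\alpha_1=1-(1-\alpha)^{1/n}$, and apply Lemma~\ref{l.iterated}. The paper's own proof stops there and says nothing further about the two-sided asymptotic $\sim_n$; you correctly supply the upper half by expanding $(1+4\varepsilon/(1-\varepsilon))^n-1$ and the lower half via the pointwise domination $M_{\operatorname S}\chi_E\leq M_1\cdots M_n\chi_E$ together with Theorem~\ref{t.solyanik}, whereas the paper defers the sharpness to the slab construction in Example~\ref{exa.slab} (a direct lower bound for $C_{\operatorname{HL},c}$, which dominates $C_{1\cdots n}$ for the same pointwise reason).
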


\begin{proof}
Using the notation of the previous lemma, we let $\alpha_n = \alpha$.   The corresponding $\alpha_1$ satisfies
$$
\alpha = 1 - (1 - \alpha_1)^n,
$$
implying that
$$
\alpha_1 = 1 -  (1 - \alpha)^{1/n}.
$$
The result follows by Lemma~\ref{l.iterated}.
\end{proof}

\section{Solyanik Estimates for the Uncentered Hardy-Littlewood maximal operator}

The primary goal in this section is to provide a Solyanik estimate for the uncentered Hardy-Littlewood maximal operator $M_{\operatorname{HL}, b}$.
\begin{thm}\label{un.solyanik}
Let $M_{\operatorname{HL}, b}$ denote the non-centered Hardy-Littlewood maximal operator, defined with respect to balls in $\mathbb{R}^n$.  Then we have the corresponding Solyanik estimate
$$
\lim_{\alpha \rightarrow 1^{-}} C_{\operatorname{HL}, b}(\alpha) = 1\;.$$
In particular we have that
$$
C_{\operatorname{HL}, b}(\alpha) - 1 \lesssim_n \big( \frac{1}{\alpha} - 1 \big)^{\frac{1}{n+1}}
$$
as $\alpha \rightarrow 1^{-}$.
\end{thm}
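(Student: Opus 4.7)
The plan is to reduce the uncentered Solyanik estimate to the centered case already established in Theorem~\ref{t.solyanik}, via a geometric decomposition of the superlevel set governed by a parameter $\eta\in(0,1)$ to be optimized at the end. Set $\Omega_\alpha := \{M_{\operatorname{HL},b}\chi_E > \alpha\}$ and let $\mathcal C$ denote the family of open balls $B = B(y_B, r_B)$ with $|B\cap E|/|B| > \alpha$, so that $\Omega_\alpha = \bigcup_{B\in\mathcal C}B$; by standard approximation one may reduce to a finite subcollection. The crucial decomposition is $\Omega_\alpha\setminus E = F_1\cup F_2$ into a \emph{near-centered} part $F_1$, consisting of those $x$ for which some $B\in\mathcal C$ contains $x$ with $|x-y_B|\leq(1-\eta)r_B$, and a \emph{near-boundary} part $F_2$, where every ball of $\mathcal C$ containing $x$ places $x$ in the thin annulus $B\setminus(1-\eta)B$.

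Estimating $F_1$ is the easy direction. If $B\in\mathcal C$ witnesses $x\in F_1$ then the triangle inequality gives $B(x,\eta r_B)\subset B$, so $|B(x,\eta r_B)\cap E^{\mathtt{c}}| \leq |B\cap E^{\mathtt{c}}| < (1-\alpha)|B|$, and hence the centered average of $\chi_E$ at $x$ at radius $\eta r_B$ exceeds $1-(1-\alpha)\eta^{-n}$. This places $F_1\cup E$ (modulo a null set, since $E$ lies in every centered level set by Lebesgue differentiation) inside $\{M_{\operatorname{HL},b}^{\operatorname{c}}\chi_E > 1-(1-\alpha)\eta^{-n}\}$, and Theorem~\ref{t.solyanik} applied to the centered operator yields $|F_1\setminus E| \lesssim_n (1-\alpha)\eta^{-n}|E|$.

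The main obstacle is the complementary bound $|F_2| \lesssim_n \eta|E|$. The geometric input is that $F_2$ is covered by the annuli $\bigcup_{B\in\mathcal C}(B\setminus(1-\eta)B)$, each of volume at most $n\eta|B|$; however, a standard Vitali selection $\{\tilde B_k\}\subset\mathcal C$ with $\bigcup\mathcal C\subset\bigcup 3\tilde B_k$ destroys this thin-annulus structure, since the dilation by $3$ converts annuli of width $\eta$ into shells of width of order one. Instead one needs a Vitali/Córdoba--Fefferman-type selection in the spirit of \cite{CorF75}, tailored to the density class $\mathcal C$: extract $\{\tilde B_k\}\subset\mathcal C$ whose cores $(1-c\eta)\tilde B_k$ are pairwise disjoint while their mild enlargements still cover $\bigcup\mathcal C$. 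The key point, essentially automatic from the definition of $F_2$, is that if $\tilde B_k\in\mathcal C$ and $x\in F_2\cap\tilde B_k$ then $x$ must lie in the annulus $\tilde B_k\setminus(1-\eta)\tilde B_k$ of $\tilde B_k$ itself. Combined with the density condition $|\tilde B_k\cap E| > \alpha|\tilde B_k|$ and disjointness of the cores (which makes the cores $(1-c\eta)\tilde B_k$ themselves $\sim \alpha$-dense in $E$), this gives $\sum_k|\tilde B_k|\lesssim_n|E|$ and therefore $|F_2|\lesssim_n\eta|E|$.

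Adding the two estimates yields $|\Omega_\alpha|-|E|\lesssim_n\bigl((1-\alpha)\eta^{-n}+\eta\bigr)|E|$, and optimizing by setting $\eta = (1-\alpha)^{1/(n+1)}$ balances the two terms to produce $|\Omega_\alpha|-|E|\lesssim_n(1-\alpha)^{1/(n+1)}|E|$, which is equivalent to $C_{\operatorname{HL},b}(\alpha)-1\lesssim_n(1/\alpha-1)^{1/(n+1)}$. In this way the exponent $1/(n+1)$ arises directly from the balance $\eta^{n+1}\sim 1-\alpha$ between the loss in the centered Solyanik estimate (scaling as $\eta^{-n}$) and the annular volume (scaling as $\eta$).
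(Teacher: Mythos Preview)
Your $F_1$ estimate via the centered Solyanik bound is correct and pleasant, but the proof breaks down at the $F_2$ estimate: the selection lemma you invoke---a subfamily $\{\tilde B_k\}\subset\mathcal C$ with \emph{pairwise disjoint cores} $(1-c\eta)\tilde B_k$ whose \emph{mild enlargements} $(1+c'\eta)\tilde B_k$ still cover $\bigcup\mathcal C$ (or even just $F_2$)---does not exist. Take two balls $B_1,B_2$ of equal radius $1$ with centers at distance $d\in(c'\eta,\,2(1-c\eta))$. Their cores intersect, so you may keep at most one, say $B_1$; but the far side of the annulus of $B_2$ contains points of $F_2$ at distance roughly $d+1\approx 3$ from the center of $B_1$, well outside $(1+c'\eta)B_1$. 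Thus neither ``disjoint cores'' nor ``mild enlargement covers'' can be dropped, and they are incompatible. Since the bound $|F_2|\lesssim_n\eta|E|$ carries the entire weight of the theorem (your $F_1$ bound alone gives nothing), this is a genuine gap, not a routine detail.

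The paper's proof avoids this obstruction by using the actual C\'ordoba--Fefferman selection rather than the variant you describe. One does \emph{not} demand disjoint cores; instead one selects $\tilde B_{k+1}$ only when $|\tilde B_{k+1}\cap\bigcup_{i\le k}\tilde B_i|\le(1-\delta)|\tilde B_{k+1}|$. This produces pairwise disjoint \emph{new parts} $\tilde E_j=\tilde B_j\setminus\bigcup_{i<j}\tilde B_i$ with $|\tilde E_j|\ge\delta|\tilde B_j|$, together with the covering $\Omega_\alpha\subset\bigcup(1+2\delta^{1/n})\tilde B_j$. The density hypothesis $|E\cap\tilde B_j|>\alpha|\tilde B_j|$ then forces $|\tilde E_j|\le\frac{\delta}{\delta-(1-\alpha)}|E\cap\tilde E_j|$, and summing gives
\[
|\Omega_\alpha|\le(1+2\delta^{1/n})^n\frac{\delta}{\delta-(1-\alpha)}\,|E|,
\]
optimized at $\delta=(1-\alpha)^{n/(n+1)}$. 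No $F_1/F_2$ split is needed, and the argument never asserts disjointness of shrunken balls. If you want to salvage your decomposition, the $F_2$ bound would have to be proved by this same C\'ordoba--Fefferman mechanism (with $\delta\sim\eta^n$), at which point the detour through the centered estimate becomes superfluous.
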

\begin{proof}
Let $0 < \alpha < 1$, and let $E$ be a set of finite measure in $\mathbb{R}^n$.  Let $\{B_j\}$ be a collection of balls such that
$$ \left\{x \in \mathbb{R}^n : M_{\operatorname{HL}, b}\chi_E (x) > \alpha\right\} = \cup_j B_j,$$
where every $B_j$ satisfies
$$\frac{1}{|B_j|} \int_{B_j}\chi_E > \alpha.$$
Without loss of generality we may assume that $\{B_{j}\}_j$ is a finite collection $\{B_{j}\}_{j=1}^N$ as our estimates of $|\cup B_j|$ will be independent of $N$. We reorder the balls $B_j$ so that they are nonincreasing in size, i.e.
$$
|B_1| \geq |B_2|  \geq \cdots\geq |B_N|.
$$
We will now obtain a subcollection $\{\tilde{B}_j\}_j$ using a selection algorithm motivated by ideas of A. C\'ordoba and R. Fefferman in \cite{CorF75}.   Let $1 > \delta > 0$; here we think of $\delta$ as being very close to 0. We choose $\tilde{B}_1 = B_1$.   Assume $\tilde{B}_1, \ldots, \tilde{B}_k$ have been selected and suppose that $\tilde B_k=B_M$ for some positive integer $M<N$. We let $\tilde{B}_{k+1}$ be the first $B_j$ on the list $B_{M+1}, B_{M+2}, \ldots,B_N$ such that
$$|B_{j} \cap (\cup_{i=1}^k \tilde{B}_i)| \leq (1-\delta) |B_j|.$$
If such a $B_j$ does not exist, the list of selected balls terminates with $\tilde{B}_k.$

Let now $x \in \{x \in \mathbb{R}^n : M_{\operatorname{HL}, b}\chi_E(x) > \alpha\}$ so $x$ necessarily lies in one of the balls $B_j$. Suppose for the moment that $B_j$ is \emph{not} one of the selected balls.  Let $B_x$ be a ball of volume $\delta |B_j|$ containing $x$ and contained in $B_{j}$.  Since $B_j$ was not selected, $B_x$ must intersect a $\tilde{B}_k$ of size larger than that of $B_j$.  As the radius of $B_x$ is less than $\delta^{1/n}$ times the radius of $\tilde{B}_k$, by the triangle inequality we have $x \in (1 + 2 \delta^{1/n})\tilde{B}_k$, where for a ball $B$ in $\mathbb{R}^n$ we let $cB$ denote the $c$-fold concentric dilate of $B$.   So
$$\{x \in \mathbb{R}^n : M_{\operatorname{HL}, b}\chi_E(x) > \alpha\} \subset \cup_j (1 + 2 \delta^{1/n})\tilde{B}_j.$$
Let now
$$\tilde{E}_j \coloneqq \tilde{B}_j \backslash \cup_{i=1}^{j-1}\tilde{B}_i.$$
We have that
$$\left|\{x \in \mathbb{R}^n : M_{\operatorname{HL}, b}\chi_E(x) > \alpha\}\right| \leq \sum_j (1 + 2 \delta^{1/n})^n|\tilde{E}_j|.$$
Since for each $j$ we have $\frac{1}{|\tilde{B}_j|}\int_{\tilde{B}_j}\chi_E > \alpha$ and moreover $|\tilde{E}_j| / |\tilde{B}_j| > \delta$, we conclude
\[
\begin{split}
\frac{1}{|\tilde{E}_j|}\int_{\tilde{E}_j}\chi_E &\geq \big[ \alpha|\tilde{B}_j| - (|\tilde{B}_j| - |\tilde{E}_j|)\big]/ |\tilde{E}_j|
\\
&\geq 1 - (1-\alpha){|\tilde{B}_j|}/{|\tilde{E}_j|} \geq 1 -(1-\alpha ) \delta^{-1}
\\
&\geq [ \delta - (1 - \alpha)]/ \delta.
\end{split}
\]
Placing an additional  restriction on $\delta$ by requiring that $1 > \delta > 1 - \alpha$, we have
$$
|\tilde{E}_j| < \frac{\delta}{\delta - (1 - \alpha)}|E \cap \tilde{E}_j|.
$$
As the $\tilde{E}_j$ are disjoint, we then have
$$
|\{x \in \mathbb{R}^n : M_{\operatorname{HL}, b}\chi_E(x) > \alpha\}| \leq (1 + 2 \delta^{1/n})^n\frac{\delta}{\delta - (1 - \alpha)}|E|.
$$
Setting $\delta = (1 - \alpha)^{\frac{n}{n+1}}$ then yields the desired estimate.
\end{proof}

We strongly suspect the the bound $( \frac{1}{\alpha} - 1 )^{1/(n+1)}$ is not sharp, as indicated by the following example.

\begin{example}\label{exa.slab} Let $E$ be the $n$-dimensional rectangle
	\[
	E\coloneqq[-100,100]\times\cdots\times[-100,100]\times[-1,1].
	\]
	\begin{figure}[htb]
	\centering
	\def\svgwidth{240pt}
	\begingroup%
	  \makeatletter%
	  \providecommand\color[2][]{%
	    \errmessage{(Inkscape) Color is used for the text in Inkscape, but the package 'color.sty' is not loaded}%
	    \renewcommand\color[2][]{}%
	  }%
	  \providecommand\transparent[1]{%
	    \errmessage{(Inkscape) Transparency is used (non-zero) for the text in Inkscape, but the package 'transparent.sty' is not loaded}%
	    \renewcommand\transparent[1]{}%
	  }%
	  \providecommand\rotatebox[2]{#2}%
	  \ifx\svgwidth\undefined%
	    \setlength{\unitlength}{281.27867067bp}%
	    \ifx\svgscale\undefined%
	      \relax%
	    \else%
	      \setlength{\unitlength}{\unitlength * \real{\svgscale}}%
	    \fi%
	  \else%
	    \setlength{\unitlength}{\svgwidth}%
	  \fi%
	  \global\let\svgwidth\undefined%
	  \global\let\svgscale\undefined%
	  \makeatother%
	  \begin{picture}(1,0.4589)%
	    \put(0,0){\includegraphics[width=\unitlength]{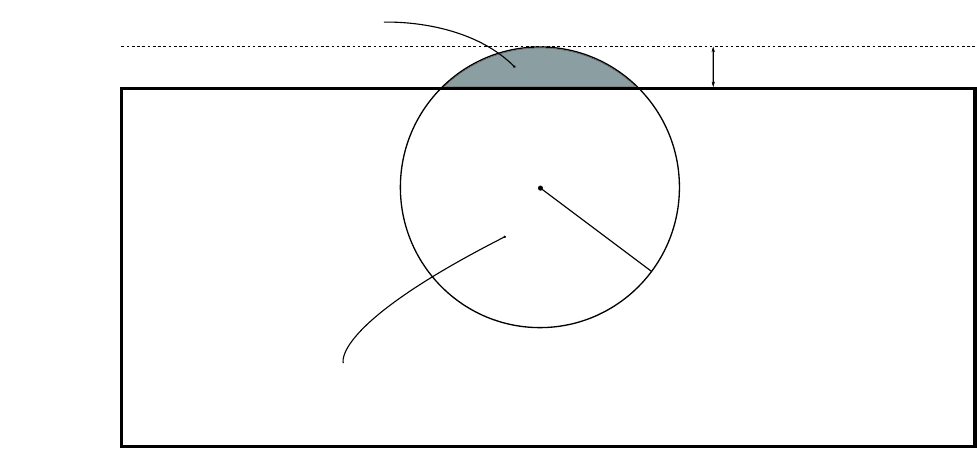}}%
	    \put(0.27049931,0.38422147){\color[rgb]{0,0,0}\makebox(0,0)[lt]{\begin{minipage}{0.09193698\unitlength}\raggedright \end{minipage}}}%
	    \put(-0.00118258,0.42838489){\color[rgb]{0,0,0}\makebox(0,0)[lb]{\smash{volume $(1-\alpha)|B|$}}}%
	    \put(0.74291265,0.37894806){\color[rgb]{0,0,0}\makebox(0,0)[lb]{\smash{$h$}}}%
	    \put(0.15242719,0.09836996){\color[rgb]{0,0,0}\makebox(0,0)[lt]{\begin{minipage}{0.48756935\unitlength}\raggedright volume $\alpha|B|$\end{minipage}}}%
	    \put(0.87766124,0.11146486){\color[rgb]{0,0,0}\makebox(0,0)[lt]{\begin{minipage}{0.18690161\unitlength}\raggedright $E$\end{minipage}}}%
	    \put(0.45357744,0.29125595){\color[rgb]{0,0,0}\makebox(0,0)[lb]{\smash{$B$}}}%
	    \put(0.57902496,0.17241096){\color[rgb]{0,0,0}\makebox(0,0)[lb]{\smash{$1$}}}%
	  \end{picture}%
	\endgroup%
	
	\caption{A ball $B$ intersecting the slab $E$.}
	\end{figure}
Consider a ball $B$ of radius $1$ intersecting the rectangle $E$ on one of its long sides and away from its corners, so that a $(1-\alpha)$ portion of $|B|$ lies outside $E$. One can calculate that the union of all such balls constitutes a region of measure approximately $(1 + h)|E|$ with $h\simeq_n (\frac{1}{\alpha} - 1)^\frac{2}{n+1}$. We conclude $$C_{\operatorname{HL,b}}(\alpha)-1\gtrsim_n(\frac{1}{\alpha} - 1)^\frac{2}{n+1}.$$

In contrast, by doing a similar calculation with a unit cube $Q$ meeting the set $E$ at an angle $\pi/4$ we get $h\simeq_n (\frac{1}{\alpha} - 1)^\frac{1}{n}$. This proves the lower bound
$$ C_{\operatorname{HL},c}(\alpha)-1\gtrsim_n\big(\frac{1}{\alpha} - 1\big)^\frac{1}{n}.$$

\begin{figure}[htb]
\centering
\def\svgwidth{240pt}
\begingroup%
  \makeatletter%
  \providecommand\color[2][]{%
    \errmessage{(Inkscape) Color is used for the text in Inkscape, but the package 'color.sty' is not loaded}%
    \renewcommand\color[2][]{}%
  }%
  \providecommand\transparent[1]{%
    \errmessage{(Inkscape) Transparency is used (non-zero) for the text in Inkscape, but the package 'transparent.sty' is not loaded}%
    \renewcommand\transparent[1]{}%
  }%
  \providecommand\rotatebox[2]{#2}%
  \ifx\svgwidth\undefined%
    \setlength{\unitlength}{314.07866692bp}%
    \ifx\svgscale\undefined%
      \relax%
    \else%
      \setlength{\unitlength}{\unitlength * \real{\svgscale}}%
    \fi%
  \else%
    \setlength{\unitlength}{\svgwidth}%
  \fi%
  \global\let\svgwidth\undefined%
  \global\let\svgscale\undefined%
  \makeatother%
  \begin{picture}(1,0.42371159)%
    \put(0,0){\includegraphics[width=\unitlength]{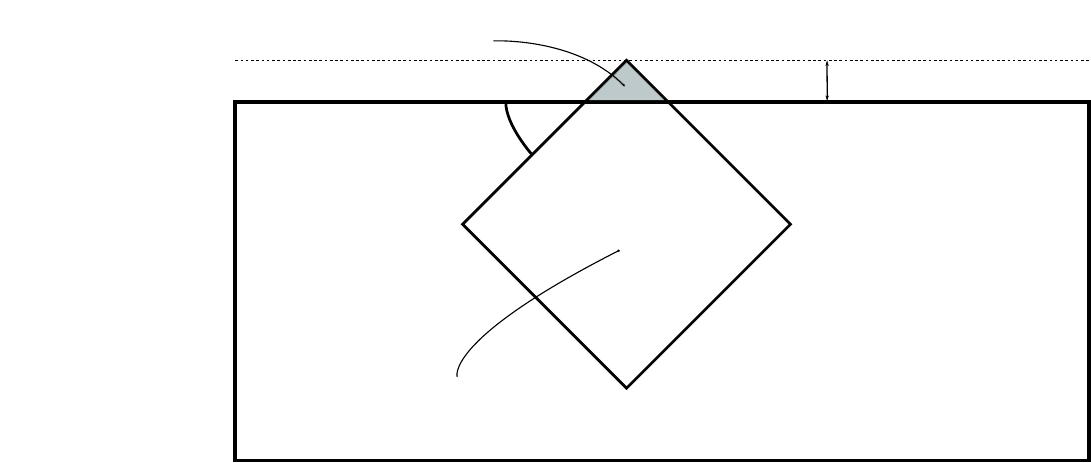}}%
    \put(0.34668284,0.34409629){\color[rgb]{0,0,0}\makebox(0,0)[lt]{\begin{minipage}{0.08233578\unitlength}\raggedright \end{minipage}}}%
    \put(-0.00105908,0.39638324){\color[rgb]{0,0,0}\makebox(0,0)[lb]{\smash{volume $(1-\alpha)|Q|$}}}%
    \put(0.7697609,0.3393736){\color[rgb]{0,0,0}\makebox(0,0)[lb]{\smash{$h$}}}%
    \put(0.24094127,0.08809694){\color[rgb]{0,0,0}\makebox(0,0)[lt]{\begin{minipage}{0.4366513\unitlength}\raggedright volume $\alpha|Q|$\end{minipage}}}%
    \put(0.89043737,0.09982432){\color[rgb]{0,0,0}\makebox(0,0)[lt]{\begin{minipage}{0.16738302\unitlength}\raggedright $E$\end{minipage}}}%
    \put(0.51064167,0.26083938){\color[rgb]{0,0,0}\makebox(0,0)[lb]{\smash{$Q$}}}%
    \put(0.68666673,0.11619869){\color[rgb]{0,0,0}\makebox(0,0)[lb]{\smash{$1$}}}%
    \put(0.40648216,0.28176223){\color[rgb]{0,0,0}\makebox(0,0)[lb]{\smash{$\frac \pi 4$}}}%
  \end{picture}%
\endgroup%
\caption{A cube $Q$ intersecting the slab $E$.}
\end{figure}
\end{example}

Observe that the latter calculation indicates that the Solyanik estimate for iterated maximal functions provided by Theorem 2 is sharp.   Moreover, the fact that the slab example provides a \emph{better} Solyanik estimate for $M_{\operatorname{HL}, b}$ inclines us to believe that Theorem 3 is not sharp, and a more refined argument might prove the following:
\begin{conjecture}
a)   We have the asymptotic estimate
$$
C_{\operatorname{HL}, b}(\alpha) - 1 \sim_n \big(\frac{1}{\alpha}- 1\big)^{\frac{1}{n}}
$$
as $\alpha\to 1^{-}$.   The exponent here is a natural one to consider, as $ (\frac{1}{\alpha}- 1\big)^{\frac{1}{n}}$ is the sharp Solyanik exponent associated to $M_{\operatorname{HL}, c}$ and $M_1 \cdots M_n$.

b)  A stronger asymptotic estimate, motivated by Example~\ref{exa.slab} above, would be that
$$
C_{\operatorname{HL}, b}(\alpha) - 1 \sim_n\big ( \frac{1}{\alpha} - 1\big)^{\frac{2}{n+1}}
$$
as $\alpha\to 1^{-}$.
\end{conjecture}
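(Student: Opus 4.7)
The conjecture asserts asymptotic equivalences, and the lower bound $C_{\operatorname{HL},b}(\alpha) - 1 \gtrsim_n (\frac{1}{\alpha} - 1)^{2/(n+1)}$ is already supplied by Example~\ref{exa.slab}. The principal task is therefore the matching upper bound $C_{\operatorname{HL},b}(\alpha) - 1 \lesssim_n (\frac{1}{\alpha} - 1)^{2/(n+1)}$ of part (b); attaining it simultaneously yields the upper bound in (a) as a corollary and, for $n\geq 2$, rules out the $\sim$ form of (a) since $2/(n+1)>1/n$. My strategy is to refine the C\'ordoba--Fefferman selection in the proof of Theorem~\ref{un.solyanik} by importing the spherical-cap geometry that drives Example~\ref{exa.slab}.

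The loss in Theorem~\ref{un.solyanik} occurs in the dilation step: an unselected ball $B_j$ satisfying $|B_j\setminus \cup_{i\leq k}\tilde B_i|<\delta|B_j|$ was captured in $(1+2\delta^{1/n})\tilde B_k$ by a crude sub-ball argument. I would instead observe that, since every $\tilde B_i$ meeting $B_j$ is larger than $B_j$ (by the size-ordered selection), the uncovered region $B_j\setminus\cup_i \tilde B_i$ is a union of spherical slivers along $\partial B_j$. The classical cap-volume formula $|B_j\setminus \tilde B_i|\sim_n r_{B_j}^{(n-1)/2}\,h^{(n+1)/2}$ then forces the deepest sliver to have height $h\lesssim_n r_{B_j}\,\delta^{2/(n+1)}$, so every $x\in B_j\setminus\cup_i \tilde B_i$ lies within distance $r_{B_j}\delta^{2/(n+1)}$ of some $\tilde B_i$. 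This replaces the dilation factor $(1+2\delta^{1/n})^n$ in the final estimate by $(1+c_n\delta^{2/(n+1)})^n$.

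A naive rebalancing of $(1+c_n\delta^{2/(n+1)})^n\cdot\delta/(\delta-(1-\alpha))$ only produces the intermediate exponent $2/(n+3)$, optimized at $\delta\sim(1-\alpha)^{(n+1)/(n+3)}$. The remaining bottleneck is the coarse bound $|\tilde E_j|/|\tilde B_j|>\delta$ used when converting $|\tilde B_j|$ to $|E\cap\tilde E_j|$. To reach the full $2/(n+1)$ one must exploit a second cap estimate: the excess $\tilde B_j\setminus E$ is itself essentially a cap of height $\lesssim_n r_{\tilde B_j}(1-\alpha)^{2/(n+1)}$ (this is precisely the content of Example~\ref{exa.slab}), and the two cap scales should combine through a coarea-type decomposition along $\partial E$ rather than through independent optimizations.

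The principal obstacle I foresee lies precisely in this interlocking of two separate cap estimates for a merely measurable $E$: the second cap assumes $\partial E$ behaves locally like a hyperplane, which is only correct on average and not pointwise. A rigorous implementation likely requires a stopping-time decomposition of $E$ into ``flat'' and ``rough'' scales or a perimeter-type bookkeeping that is absent from the hypothesis. For part (a) there is the further obstacle that no lower-bound example realizing $(1/\alpha-1)^{1/n}$ appears in the paper; constructing one, if it exists at all, would require a set whose boundary geometry is genuinely worse than the slab's, and I suspect no such example exists and that part (b) is the correct answer, with part (a) surviving only as a one-sided upper bound.
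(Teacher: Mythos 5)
This statement is a \emph{conjecture}: the paper does not prove it, and offers only the two one-sided results you cite, namely the upper bound $C_{\operatorname{HL},b}(\alpha)-1\lesssim_n(\frac1\alpha-1)^{1/(n+1)}$ of Theorem~\ref{un.solyanik} and the lower bound $\gtrsim_n(\frac1\alpha-1)^{2/(n+1)}$ of Example~\ref{exa.slab}. So there is no proof in the paper to compare against, and your proposal should be judged as a proof attempt of an open problem. As such it is not a proof: it is a program, and its one genuinely new quantitative step is flawed. You claim that if $B_j$ is unselected, then because every selected ball meeting it is at least as large, the uncovered set $B_j\setminus\bigcup_i\tilde B_i$ consists of boundary slivers of depth $h\lesssim_n r_{B_j}\delta^{2/(n+1)}$, so that every uncovered point lies within distance $r_{B_j}\delta^{2/(n+1)}$ of some selected ball. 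That is only correct when a \emph{single} large ball does the cutting. When several larger (hence nearly flat, in the worst case half-space-like) selected balls each shave off a piece of $B_j$, the uncovered region can be a simplex-like pocket in the interior: in the plane, three nearly flat caps can leave a curvilinear triangle of area $\delta r^2$ whose inradius is of order $\delta^{1/2}r$, and in $\mathbb{R}^n$ an $(n+1)$-face pocket of volume $\delta r^n$ has inradius of order $\delta^{1/n}r$. Thus the best distance bound valid for all configurations is $\sim\delta^{1/n}r_{B_j}$, which is exactly what the original sub-ball argument already yields, and the proposed replacement of the dilation factor $(1+2\delta^{1/n})^n$ by $(1+c_n\delta^{2/(n+1)})^n$ does not follow.

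Beyond that, you yourself flag the second stage (coupling the selection argument with a cap estimate along $\partial E$ for a merely measurable set $E$) as requiring a stopping-time or perimeter-type mechanism you do not supply, and you produce neither an upper bound matching $2/(n+1)$ nor any lower bound matching $1/n$; so neither part (a) nor part (b) is established, even conditionally. Two smaller points are fine and worth keeping: your observation that the $\sim$ forms of (a) and (b) are mutually exclusive for $n\geq2$ (since $2/(n+1)>1/n$), and your correct identification that the paper's Example~\ref{exa.slab} already supplies the lower bound relevant to (b). But as it stands the proposal neither closes the gap between $1/(n+1)$ and $2/(n+1)$ nor decides between the two conjectured exponents.
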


\section{Solyanik estimates for homothecy invariant bases of convex sets}

With the Solyanik estimates associated to Theorems~\ref{t.solyanik}-\ref{un.solyanik} in hand, it is natural to try to extend these types of results to encompass maximal operators such as the maximal operator with respect to rectangles along lacunary directions. Rather than focus our attention on a particular maximal operator, we will here consider the following more general problem:

\begin{problem} Let $\mathcal{B}$ denote a collection of open  bounded sets in $\mathbb{R}^n$ and $M_{\mathcal{B}}$ the associated geometric maximal operator.  Define the associated Tauberian constants $C_{\mathcal{B}}(\alpha)$ by
$$
C_{\mathcal{B}}(\alpha) \coloneqq \sup_{E :\, 0 < |E| < \infty}\frac{1}{|E|} | \{x \in \mathbb{R}^n:\, M_{\mathcal{B}}\chi_E (x) > \alpha \}|.
$$
For which $\mathcal{B}$ do we have
$$
\lim_{\alpha \rightarrow 1^-}C_{\mathcal{B}}(\alpha) = 1?
$$
\end{problem}

We would expect that the maximal operator $M_{\mathcal{B}}$ should be somewhat well-behaved in order to have $\lim_{\alpha \rightarrow 1^-}C_{\mathcal{B}}(\alpha) = 1$, as such an estimate would not hold if $\mathcal{B}$ were, say, the collection of all rectangles in $\mathbb{R}^2$.
However, simple $L^p$ boundedness of $M_{\mathcal{B}}$ or even weak type (1,1) bound on $M_{\mathcal{B}}$ is not enough to guarantee that $M_{\mathcal{B}}$ satisfies a Solyanik estimate, as is indicated by the following example of Beznosova and Hagelstein found in \cite{BH}.

\begin{example}\label{exa.nonconvex} Let $\mathcal{B}$ consist of all the homothecies of sets in $\mathbb{R}$ in the collection
$$\{((0,1) \cup (x, x+ \epsilon))\cap(0,2):\, x \in (0,2),\, \epsilon > 0\}.$$
The operator $M_{\mathcal{B}}$ is  dominated by twice the Hardy-Littlewood maximal operator and hence is bounded on $L^{p}(\mathbb{R})$ for $1 < p \leq \infty$ and is of weak type $(1,1)$.    Observe however that $M_{\mathcal{B}}\chi_{(0,1)} = 1$ on $(0,2)$ and hence we have that $\lim_{\alpha \rightarrow 1^{-}} C_{\mathcal{B}}(\alpha) \geq 2$.
\end{example}

Note that the sets in the collection $\mathcal{B}$ above are not all \emph{convex}.  We have previously seen convexity play an important role in problems involving Tauberian conditions, examples including the previously mentioned work of Hagelstein and Stokolos \cite{hs} and Hagelstein, Luque, and Parissis \cite{hlp}.   This naturally leads us to the following conjecture involving convex density bases. (Recall that a \emph{density basis} $\mathcal{B}$ in $\mathbb{R}^n$ is a collection of sets for which
$$\lim_{\substack{x \in R \in B \\ \operatorname{diam}(R) \rightarrow 0}}\frac{1}{|R|}\int_R \chi_E = \chi_E (x)$$ holds for a.e. $x\in\R^n$, for every set $E \subset \mathbb{R}^n$ of finite measure.   An important result of Busemann and Feller is that the maximal operator $M_{\mathcal{B}}$ associated to a homothecy invariant density basis $\mathcal{B}$ satisfies a Tauberian condition with respect to $\alpha$ for every $\alpha > 0$.   See \cites{BF, Gu} for details.)

\begin{conjecture}\label{conj.cont} Let $\mathcal{B}$ be a homothecy invariant density basis of bounded convex sets in $\mathbb{R}^n$.  Then the associated Tauberian constants $C_{\mathcal{B}}(\alpha)$ satisfy
$$
\lim_{\alpha \rightarrow 1^-}C_{\mathcal{B}}(\alpha) = 1.
$$
\end{conjecture}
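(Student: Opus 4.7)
The plan is to adapt the Córdoba--Fefferman selection scheme used in the proof of Theorem~\ref{un.solyanik} to a general homothecy invariant density basis $\mathcal{B}$ of bounded convex sets. Given $0 < \alpha < 1$ and a measurable set $E$ with $|E| < \infty$, I would cover the superlevel set $\{M_{\mathcal{B}}\chi_E > \alpha\}$ by a finite collection $\{R_j\}_{j=1}^{N} \subset \mathcal{B}$ with $|R_j \cap E|/|R_j| > \alpha$, ordered by decreasing diameter. For a parameter $\delta \in (1-\alpha, 1)$ one then selects $\tilde{R}_{k+1}$ to be the first unpicked $R_j$ for which $|R_j \cap \bigcup_{i \leq k} \tilde{R}_i| \leq (1-\delta)|R_j|$. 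Writing $\tilde{E}_j \coloneqq \tilde{R}_j \setminus \bigcup_{i<j}\tilde{R}_i$, the same arithmetic that closes the proof of Theorem~\ref{un.solyanik} then yields
\[
|\tilde{E}_j| \leq \frac{\delta}{\delta - (1-\alpha)}|E \cap \tilde{E}_j|.
\]
If for each $j$ one can produce a convex enlargement $\Phi_\delta(\tilde{R}_j)$ containing every unselected $R$ that the algorithm absorbs into $\tilde{R}_j$ and satisfying $|\Phi_\delta(\tilde{R}_j)| \leq (1+\eta(\delta))|\tilde{R}_j|$ with $\eta(\delta)\to 0$ as $\delta\to 1^-$, then summing would bound $|\{M_{\mathcal{B}}\chi_E > \alpha\}|$ by $(1+\eta(\delta))\frac{\delta}{\delta-(1-\alpha)}|E|$, and optimizing $\delta$ against $1-\alpha$ would deliver the conjectured Solyanik estimate.

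The heart of the matter is thus a purely geometric covering lemma: if $R$ and $\tilde{R}$ are convex sets in $\mathcal{B}$ with $\mathrm{diam}(R) \leq \mathrm{diam}(\tilde{R})$ and $|R \cap \tilde{R}| \geq \delta|R|$, then $R \subset \Phi_\delta(\tilde{R})$ for some convex enlargement of small volume overhead. For the ball basis this is the triangle inequality, and for cubes it follows by passing to the inscribed ball. More generally one could try to replace $\tilde{R}$ by its John ellipsoid, which is comparable to $\tilde{R}$ up to a dimensional constant, and transfer the Euclidean estimate through the associated affine map. When $R$ and $\tilde{R}$ are affinely comparable, this transfer produces an enlargement of the form $(1 + c_n \delta^{1/n})\tilde{R}$ about the John center and gives Solyanik estimates in the style of Theorem~\ref{un.solyanik} for any basis of uniformly comparable shape, e.g.\ all ellipsoids, all regular simplices, or the homothecy orbit of a single convex body.

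The main obstacle, and the reason Conjecture~\ref{conj.cont} is stated as a conjecture, is that a genuinely anisotropic basis can contain convex sets of wildly different affine shapes, so the absorbed $R$ and the absorbing $\tilde{R}$ need have no geometric resemblance: a long thin $R$ can meet a cube-like $\tilde{R}$ tangentially and extend a distance of order $\mathrm{diam}(\tilde{R})$ outside any small concentric enlargement of $\tilde{R}$, breaking the volume budget. Two plausible routes around this are (i) to refine the selection algorithm so that it partitions $\mathcal{B}$ into countably many subfamilies of uniformly bounded aspect ratio, run the Córdoba--Fefferman argument inside each subfamily, and use the Busemann--Feller density-basis property to control the coupling between subfamilies, and (ii) to replace the geometric enlargement step by an $L^1$ covering lemma in the spirit of \cite{CorF75} that exploits directly the fact that every selected $\tilde{R}_j$ contains $E$ at density exceeding $\alpha$. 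I expect any proof of Conjecture~\ref{conj.cont} to combine such a refined selection with a quantitative form of the Busemann--Feller theorem.
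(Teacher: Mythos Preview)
This statement is a \emph{conjecture} in the paper, not a theorem: the authors do not prove it, and your proposal does not prove it either.  What you have written is an honest outline of the C\'ordoba--Fefferman strategy together with a correct identification of where it breaks down.  The gap you name---that for a general convex density basis the absorbed set $R$ and the absorbing set $\tilde R$ can have unrelated affine shapes, so no uniform enlargement $\Phi_\delta(\tilde R)$ with $|\Phi_\delta(\tilde R)|\le (1+\eta(\delta))|\tilde R|$ need exist---is exactly the obstruction, and neither of your proposed workarounds (aspect-ratio stratification, or an $L^1$ covering lemma) is carried out.  So this remains a sketch, not a proof.

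It is worth noting that the paper's own approach to the conjecture is rather different from yours.  Instead of pushing the C\'ordoba--Fefferman selection scheme, the authors prove the endpoint statement $|\{M_{\mathcal B}\chi_E = 1\}| = |E|$ for any homothecy invariant convex density basis (Theorem~\ref{t.singlesol}), using the density property and a diameter blow-up argument, and then reduce Conjecture~\ref{conj.cont} to a compactness-type statement (Conjecture~3): if $C_{\mathcal B}(\alpha)\ge\gamma>1$ for all $\alpha<1$ with witnessing sets $E_{\alpha,\gamma}$, one should be able to manufacture a single set $E_\gamma$ with $|\{M_{\mathcal B}\chi_{E_\gamma}=1\}|\ge c(\gamma)|E_\gamma|$, contradicting Theorem~\ref{t.singlesol}.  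Your route is quantitative and geometric; the paper's is qualitative and indirect.  Either would be interesting to complete, but neither is complete at present.
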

The following theorem provides some evidence that the above conjecture is on the right track.

\begin{thm}\label{t.singlesol} Let $\mathcal{B}$ be a homothecy invariant density basis of convex sets in $\mathbb{R}^n$.   Then
$$
\big|\big\{x \in \mathbb{R}^n :\, M_{\mathcal{B}}\chi_E(x) = 1\big\}\big| = |E|
$$
holds for every measurable set $E$ in $\mathbb{R}^n$.
\end{thm}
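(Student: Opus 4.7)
The plan is to prove both inclusions separately. The easy direction $|\{M_{\mathcal{B}}\chi_E = 1\}| \geq |E|$ follows immediately from the density basis hypothesis: for almost every $y \in E$, the averages $|E \cap R|/|R|$ over $R \in \mathcal{B}$ containing $y$ with $\operatorname{diam}(R) \to 0$ converge to $\chi_E(y) = 1$, and since these averages never exceed $1$, we get $M_{\mathcal{B}}\chi_E(y) = 1$ on $E$ modulo null.

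For the reverse inequality I would argue by contradiction. Let $E^{*}$ denote the set of $\mathcal{B}$-density points of $E$, so that $|E^{*}| = |E|$, and suppose $F := \{M_{\mathcal{B}}\chi_E = 1\} \setminus E^{*}$ has positive measure. Applying the density basis property to the measurable set $F$, almost every $y \in F$ is a $\mathcal{B}$-density point of $F$; since $F$ is disjoint from $E^{*}$, such a $y$ is moreover a $\mathcal{B}$-density-zero point of $E$. Fixing such a $y$ and a small $\eta > 0$, we have (a) a convex $R \in \mathcal{B}$ with $y \in R$ and $|E \cap R|/|R| > 1 - \eta$, and (b) a $\delta > 0$ such that $|E \cap R'|/|R'| < \eta$ for every $R' \in \mathcal{B}$ containing $y$ with $\operatorname{diam}(R') < \delta$.

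The crucial mechanism is a shrinking estimate combining convexity with homothecy invariance. For $\lambda \in (0,1]$, the homothety $R_\lambda := y + \lambda(R - y)$ belongs to $\mathcal{B}$, and, because $R$ is convex with $y \in R$, every point of $R_\lambda$ is a convex combination of $y$ and a point of $R$, so $R_\lambda \subseteq R$. Since $|R_\lambda| = \lambda^{n}|R|$, this gives
\[
\frac{|E \cap R_\lambda|}{|R_\lambda|} \geq 1 - \frac{|R \setminus E|}{|R_\lambda|} > 1 - \frac{\eta}{\lambda^{n}}.
\]
Setting $\lambda^{n} = 2\eta$ forces $|E \cap R_\lambda|/|R_\lambda| > 1/2$, which together with (b) forces $\operatorname{diam}(R_\lambda) \geq \delta$, i.e.\ $\operatorname{diam}(R) \geq \delta (2\eta)^{-1/n}$. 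Combined with the trivial bound $|R| \leq |E|/(1 - \eta)$ coming from $|E \cap R| \leq |E|$, the maximizing $R$'s must have unbounded diameter but bounded volume as $\eta \to 0^{+}$; they must therefore degenerate into thin convex sets through $y$.

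The hard part is closing the contradiction from this degeneration. My plan is to exploit that, for almost every $y \in F$, $y$ is also a classical Lebesgue density-zero point of $E$ with respect to Euclidean balls, so $|E \cap B(y,r)|/|B(y,r)| \to 0$ as $r \to 0$. Decomposing a thin maximizing $R$ as $R = (R \cap B(y,r)) \cup (R \setminus B(y,r))$ and using that the tangent cone of the convex set $R$ at $y$ has positive solid angle (so $|R \cap B(y,r)|$ is a definite fraction of $|B(y,r)|$ for small $r$), the density-zero bound for $E$ at $y$ forces a quantitative incompatibility among $|R|$, the shape of $R$ at $y$, and the required density $|E \cap R|/|R| > 1 - \eta$. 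A compactness argument in the class of convex bodies containing $y$, combined with the positive measure of $F$ to produce limiting tangent structures, should then yield the desired contradiction. Making this final geometric step fully rigorous, so as to accommodate arbitrarily thin convex sets possibly present in the basis, is the principal technical obstacle.
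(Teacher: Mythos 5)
Your easy inclusion, your reduction to a positive-measure set $F$ of $\mathcal{B}$-density-zero points of $E$ on which $M_{\mathcal{B}}\chi_E = 1$, and your homothety/convexity shrinking estimate reproduce, essentially verbatim, the first half of the paper's argument: the near-maximizing sets $R$ through such a point must have diameter tending to infinity while their volumes stay bounded by about $|E|$. Up to there the proposal is sound. The gap is the final step, which you yourself flag as not carried out, and the route you sketch cannot work as stated. First, the claim that $|R \cap B(y,r)|$ is a definite fraction of $|B(y,r)|$ is not uniform in $R$: for a convex set of width $w$ through $y$ the fraction is of order $w/r$, so the scale $r$ at which it becomes definite degenerates with the shape of $R$ -- precisely the thin regime you must handle. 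More fundamentally, no information localized at $y$ can produce the contradiction: the density-zero hypothesis (Euclidean or $\mathcal{B}$-wise) only shows that the portion of $R$ near $y$ is mostly disjoint from $E$, and that portion may be a negligible fraction of $|R|$, which is perfectly compatible with $|E \cap R|/|R| > 1 - \eta$, since a long thin convex set through $y$ can pick up almost all of its mass from a distant piece of $E$ (compare Example~\ref{exa.nonconvex}: the failure there is exactly through mass captured far from the point, and convexity does not rule this out by local considerations alone). A compactness argument in the class of convex bodies also does not obviously apply, because the relevant sets have unbounded diameter and degenerate under any normalization.

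The missing ingredient is global: the Busemann--Feller theorem, which says that a homothecy invariant density basis satisfies a Tauberian condition $|\{x : M_{\mathcal{B}}\chi_A(x) > \lambda\}| \leq c(\lambda)|A|$ for \emph{every} $\lambda \in (0,1)$. Since your near-maximizing sets $R_{x,j}$ have bounded volume and $\operatorname{diam}(R_{x,j}) \to \infty$, and each is convex (hence contained in a rectangle of comparable volume with one side length tending to $0$), one gets $|R_{x,j} \cap B(0,r)| \to 0$ for every fixed $r$; consequently the averages of $\chi_{E \cap B(0,r)^\mathtt{c}}$ over $R_{x,j}$ also tend to $1$, so $F \subset \{x : M_{\mathcal{B}}\chi_{E \cap B(0,r)^\mathtt{c}}(x) > \lambda\}$ for every $r > 0$ and every $\lambda < 1$. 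The Tauberian bound then yields $0 < |F| \leq c(\lambda)\,|E \cap B(0,r)^\mathtt{c}| \to 0$ as $r \to \infty$ (here $|E| < \infty$ is used), which is the desired contradiction. Replacing your tangent-cone/compactness step by this escape-to-infinity argument plus the Busemann--Feller Tauberian estimate completes the proof, and is exactly how the paper concludes.
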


To appreciate the role that convexity plays in the following argument, observe that the conclusion of this theorem does \emph{not} hold when $\mathcal{B}$ is the homothecy invariant collection of sets indicated in Example~\ref{exa.nonconvex} above.

\begin{proof}

 Let us fix some measurable set $E\subset\mathbb \mathbb R^n$ with $|E|>0$. Since $\mathcal{B}$ is a density basis, for a.e. $x \in \mathbb{R}^n$ we have that
$$
 \lim_{j \rightarrow \infty}\frac{1}{|R_{x,j}|}\int_{R_{x,j}}\chi_E= \chi_E(x),
$$
where $R_{x,j}$ is any sequence of sets in $\mathcal{B}$ containing $x$ whose diameters tend to $0$; for this and other basic properties of density bases, see \cite{Gu}*{Ch. III}. So
 $$
 E \subset  \{x \in \mathbb{R}^n:\, M_{\mathcal{B}}\chi_E(x) = 1\} \quad\text {a.e.}
 $$
 and in particular
\begin{equation}\label{e.singlesol}
 |E| \leq  | \{x \in \mathbb{R}^n :\, M_{\mathcal{B}}\chi_E(x) = 1 \} | .
\end{equation}
If $|E| = \infty$ the theorem automatically holds so we may assume without loss of generality that $|E| < \infty$.

The rest of the proof is by way of contradiction and the argument is divided into two basic steps.
	
\subsubsection*{Step 1:} Suppose that \eqref{e.singlesol} fails. Then there exists a set $A\subset E^\mathtt{c}$ with $|A|>0$ such that, for every $x\in A$ there exists a sequence of sets $\{R_{x,j}\}_j \subset \mathcal B$ satisfying $x\in R_{x,j}$ for all $j$, $\lim_{j\to +\infty}\operatorname{diam}(R_{x,j})=+\infty$ and
\begin{equation}\label{e.averageto1}
 \frac{1}{|R_{x,j}|}\int_{R_{x,j}} \chi_E>1-\frac{1}{j},\quad j=2,3,\ldots \, .
\end{equation}
We now prove this claim. Assuming that \eqref{e.singlesol} fails and letting
$$
\mathcal H_E\coloneqq \{x \in \mathbb{R}^n :\, M_{\mathcal{B}}\chi_E(x) = 1 \}\setminus E
$$
we have that $|\mathcal H_E|>0$. Now let $A$ denote the set
$$
 A= \mathcal H_E \cap\bigg\{x\in E^\mathtt{c}: \lim_{\substack {x\in R\in\mathcal B\\ \operatorname{diam}(R)\to 0}}\frac{1}{|R|}\int_{R}\chi_E =0 \bigg\}.
$$
Since $\mathcal B$ is a density basis we have that $|A|=|\mathcal H_E|>0$. We fix $x\in A$. Since $x\in \mathcal H _E $ we conclude that for every positive integer $j\geq 2$ there exists a sequence $\{R_{x,j}\}_j\subset \mathcal B$, $x\in R_{x,j}$ for each $j$ and \eqref{e.averageto1} holds. It remains to show that $\lim_{j\to +\infty}\operatorname{diam}(R_x,j)=+\infty$. By the definition of $A$ there exists $\delta=\delta_x>0$ such that
$$
x\in R\in\mathcal B, \, \operatorname{diam}(R)<\delta \Rightarrow \frac{1}{|R|}\int_R \chi_E <\frac{1}{2}.
$$
Furthermore, it is clear that $\inf_j \operatorname{diam}(R_{x,j})\geq c>0$ otherwise the averages in \eqref{e.averageto1} would have a subsequence converging to $0$. The previous discussion and the convexity hypothesis for the collection $\mathcal B$ imply that there exists a homothetic copy $S_{R_j}$ of $R_{x,j}$ with $\operatorname{diam}(S_{R_j})=\frac{1}{2}\min(c,\delta)$ that satisfies
$$
x\in S_{R_j}\subset R_{x,j} \quad\text{and}\quad \frac{|E\cap S_{R_j}|}{|S_{R_j}|}<\frac{1}{2}.
$$
It is essential to notice here that the diameter of $S_{R_j}$ is independent of $j$. We have
\begin{align*}
1-\frac{1}{j}\leq \frac{|E\cap R_{x,j}|}{|R_{x,j}|}&=\frac{|E\cap S_{R_j} |}{|R_{x,j}|}+ \frac{|E\cap R_{x,j}\setminus S_{R_j} |}{|R_{x,j}|}
\\
&\leq \frac{|E\cap S_{R_j} |}{|R_{x,j}|}+\frac{|R_{x,j}|-|S_{R_j}|}{|R_{x,j}|}
\\
&= \frac{|E\cap S_{R_j}|}{|S_{R_j}|}\bigg(\frac{\operatorname{diam}(S_{R_j})}{\operatorname{diam}(R_{x,j})}\bigg)^n+1-\bigg(\frac{\operatorname{diam}(S_{R_j})}{\operatorname{diam}(R_{x,j})}\bigg)^n
\\
&\leq 1-\frac{1}{2}\bigg(\frac{\operatorname{diam}(S_{R_j})}{\operatorname{diam}(R_{x,j})}\bigg)^n.
\end{align*}
Thus we have
$$
\operatorname{diam}(R_{x,j}) \geq \frac{\operatorname{diam}(S_{R_j}) }{2^\frac{1}{n}}j^\frac{1}{n}=\frac{\frac{1}{2}\min(c,\delta) }{2^\frac{1}{n}}j^\frac{1}{n} \to +\infty\quad\text{as}\quad j\to+\infty.
$$
This proves the claim of the first step.

\subsubsection*{Step 2:} Suppose that $\{R_j\}_j$ is a sequence of convex sets whose diameters satisfy $\operatorname{diam}(R_j)\to+\infty$ and $\sup_j |R_j|<+\infty$. Then for any bounded set $B$ we have that $\lim_{j\to+\infty}|B\cap R_j|= 0$.

To see this note that every convex set in $\mathbb R^n$ is contained in a rectangle of comparable volume. Thus we can assume that $\{R_j\}_j$ is a sequence of rectangles in $\mathbb R^n$. Since $\sup_j |R_j|<+\infty$ and the diameters of the rectangles $R_j$ tend to infinity we conclude that there is a one-dimensional side $I_j$ of $R_j$ such that $\lim_{j\to+\infty}|I_j|=0$. The claim now follows since
$$
|R_j\cap B|\leq |I_j| |\operatorname{diam}(B)|^{n-1}\to 0\quad\text{as}\quad j\to+\infty.
$$

We can now conclude the proof of theorem. Assuming~\eqref{e.singlesol} does not hold let us consider the set $A$ provided by the first step above. We fix some ball $B(0,r)$ and $x\in A$ and $R_{x,j}\ni x$ as in the first step.  Note that, necessarily, $\sup_j|R_{x,j}|<+\infty$ because of the validity of \eqref{e.averageto1}. Thus
$$
\frac{|B(0,r)^\mathtt{c} \cap E\cap R_{x,j}|}{|R_{x,j}|}\geq \frac{ |E\cap R_{x,j}|}{|R_{x,j}|}-\frac{|B(0,r)\cap R_{x,j}|}{|R_{x,j}|}\to 1\quad\text{as}\quad j\to+\infty
$$
by \eqref{e.averageto1} and the statement of the second step. This implies that for any $r>0$ and $0<\lambda<1$ we have
$$
A\subset \{x\in \mathbb R^n: M(\chi_{E\cap B(0,r) ^\mathtt{c}})>\lambda\}.
$$
However $\mathcal B$ is a homothecy invariant density basis so by the Tauberian condition we should have
$$
0<|A|\leq  |\{x\in \mathbb R^n: M(\chi_{E\cap B(0,r) ^\mathtt{c}})>\lambda\}|\leq c(\lambda)|E\cap B^\mathtt{c}(0,r)|
$$
which is clearly a contradiction since $|E|<+\infty$ and thus $|E\cap B(0,r)^\mathtt{c}|\to 0$ as $r\to+\infty$.
\end{proof}

We are quickly exhausting all that we know at the moment regarding Solyanik estimates in harmonic analysis. As a closing remark, it is worth noting that Theorem~\ref{t.singlesol} provides a viable strategy to proving Conjecture~\ref{conj.cont}. Namely, to prove Conjecture~\ref{conj.cont} it now suffices to prove the following:

\begin{conjecture} Let $\mathcal{B}$ be a homothecy invariant density basis of convex sets in $\mathbb{R}^n$.  Suppose for some $\gamma > 1$ we have that, for every $0 < \alpha < 1$, there exists a set $E_{\alpha, \gamma}$ such that
$$
\big|\big\{x \in \mathbb{R}^n :\, M_{\mathcal{B}}\chi_{E_{\alpha, \gamma}}(x) > \alpha\big\}\big| \geq \gamma |E_{\alpha, \gamma}|.
$$
Then there exists a set $E_{\gamma}$ and a constant $c(\gamma)>1$ such that
$$
\big|\big\{x \in \mathbb{R}^n :\, M_{\mathcal{B}}\chi_{E_{\gamma}}(x) = 1\big\}\big| \geq c(\gamma) |E_{\gamma}|.
$$
\end{conjecture}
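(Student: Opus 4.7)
The plan is to construct $E_\gamma$ as a carefully arranged superposition of homothetic copies of the witnesses furnished by the hypothesis, so as to force $M_{\mathcal{B}}\chi_{E_\gamma}$ to attain the value $1$ on a set of positive measure strictly larger than $|E_\gamma|$. Using the hypothesis with $\alpha_k = 1 - 1/k$, I obtain for each $k$ a witness $E_k$ satisfying $|F_k| \geq \gamma |E_k|$, where $F_k \coloneqq \{M_{\mathcal{B}}\chi_{E_k} > 1 - 1/k\}$. Since $|F_k \setminus E_k| \geq (\gamma-1)|E_k| > 0$, for every $k$ one can select a point $p_k \in F_k \setminus E_k$ together with a set $R_k \in \mathcal{B}$ with $p_k \in R_k$ and $|R_k \cap E_k|/|R_k| > 1 - 1/k$. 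The homothecy invariance of $\mathcal{B}$ allows me to rescale and translate each pair $(E_k, R_k)$ freely while preserving these ratios.

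The heart of the construction is an iterative ``bouquet'' procedure. Starting from an initial witness $E^{(0)} = E_{k_0}$, I consider its excess $G^{(0)} \coloneqq F^{(0)}\setminus E^{(0)}$ and cover it—up to a controlled error—by rescaled $F_{k_1}$-regions of homothetic copies of $E_{k_1}$ placed with their $F$-regions sitting inside $G^{(0)}$, so that the maximal function of the enlarged set $E^{(1)}$ exceeds $1 - 1/k_1$ on essentially all of $G^{(0)}$. Iterating with indices $k_0 < k_1 < k_2 < \cdots$ tending to infinity produces a decreasing sequence of excess regions $G^{(0)} \supset G^{(1)} \supset \cdots$ and an increasing sequence of sets $E^{(n)} \nearrow E_\gamma$. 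The volume added at step $n$ is controlled by a factor $1/\gamma$ times $|G^{(n-1)}|$, so a geometric-series estimate bounds $|E_\gamma|$ in terms of $|E^{(0)}|$ and $\gamma$. On the survivor set $A \coloneqq \bigcap_n G^{(n)}$, one has $M_{\mathcal{B}}\chi_{E_\gamma}(x) > 1 - 1/k_n$ for every $n$ and hence $M_{\mathcal{B}}\chi_{E_\gamma}(x) = 1$, while $A$ is disjoint from $E_\gamma$ by construction. Taking $c(\gamma)$ as a suitable constant arising from the geometric ratios would then yield $|\{M_{\mathcal{B}}\chi_{E_\gamma} = 1\}| \geq |E_\gamma| + |A| \geq c(\gamma)|E_\gamma|$.

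The principal obstacle is to show that the survivor set $A$ has strictly positive measure; indeed, the construction is reminiscent of a fat Cantor set and it is crucial that the covering at each iteration be efficient enough that the measures $|G^{(n)}|$ do not collapse too fast. The delicate point is that we have no a priori control over the \emph{shape} of the $F_{k_n}$-regions—only their relative measure $|F_{k_n}|/|E_{k_n}| \geq \gamma$—so one needs either a covering lemma adapted to the level sets of maximal functions on convex bases, or else an alternative limit procedure in the space of characteristic functions. A natural fallback would be to pass to a weak-$*$ limit in $L^\infty$ of the rescaled indicators $\chi_{E_k}$, obtain a function $0 \leq f \leq 1$ whose maximal function attains the value $1$ on a set of measure $\geq \gamma \int f$, and then round $f$ back to an honest characteristic function while approximately preserving the large $1$-halo. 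In either approach, the convexity hypothesis on $\mathcal{B}$ is expected to enter essentially, through the regularity it imposes on the shape of the level sets $F_k$ or through applications of Theorem~\ref{t.singlesol} at intermediate steps of the iteration.
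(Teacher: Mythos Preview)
This statement is presented in the paper as an \emph{open conjecture}; the authors supply no proof, noting only that it would, together with Theorem~\ref{t.singlesol}, settle Conjecture~\ref{conj.cont}. There is thus no argument in the paper against which to compare your proposal, and what you have written is best read as a plan of attack on an open problem.

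The obstacle you flag in your final paragraph is not a loose end but a structural barrier to the scheme as stated. The hypothesis only gives $|F_{k}|/|E_{k}| \geq \gamma$, and nothing rules out this ratio being exactly $\gamma$ for every $k$. In that regime your two requirements pull against one another. To cover $G^{(n-1)}$ by $F$-regions one must lay down total $F$-measure at least $|G^{(n-1)}|$, hence total $E$-measure at least $|G^{(n-1)}|/\gamma$; if the $F$-regions are essentially disjoint (so that the $E$-copies are too), a fixed $1/\gamma$ fraction of $G^{(n-1)}$ is absorbed into $E^{(n)}$ at every stage, giving $|G^{(n)}|\sim(1-1/\gamma)^n|G^{(0)}|$ and hence $|A|=0$. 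If instead you force $|G^{(n)}|$ to stay bounded below, the $E$-mass added per step is bounded below as well and $|E_\gamma|=\infty$, so the target inequality is vacuous. Escaping this dichotomy would require either witnesses with $|F_{k_n}|/|E_{k_n}| \to \infty$---which the hypothesis does not provide---or a covering of $G^{(n-1)}$ by $F$-regions whose associated $E$-parts have arbitrarily small union, and no covering lemma of that kind is available for level sets of maximal operators over a general convex basis. Your weak-$*$ fallback has analogous gaps: after rescaling the indicators may tend weakly to $0$, and even when a nontrivial limit $f$ appears, rounding it to a characteristic function while retaining a large $1$-halo is itself a substantive problem. Your sketch locates the difficulty correctly but does not resolve it; the conjecture remains open.
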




\begin{bibsection}
\begin{biblist}
	

\bib{BH}{article}{
	Author = {O. V. Beznosova},
	Author = {P. A. Hagelstein},
	Title = {Continuity of halo functions associated to homothecy invariant density bases},
	Url = {http://arxiv.org/abs/1212.4199},
    journal={Colloquium Mathematicum},
    volume={134},
    date={2014},
    number={2},
    pages={235--243}
    }

\bib{BF}{article}{
author = {H. Busemann,}*{inverted={yes}},
author = {W. Feller,}*{inverted={yes}},
journal = {Fundamenta Mathematicae},
language = {ger},
number = {1},
pages = {226--256},
publisher = {Institute of Mathematics Polish Academy of Sciences},
title = {Zur Differentiation der Lebesgueschen Integrale},
url = {http://eudml.org/doc/212688},
volume = {22},
year = {1934},
}	

\bib{CorF75}{article}{
   author={C\'ordoba, A.},
   author={Fefferman, R.},
   title={A geometric proof of the strong maximal theorem},
   journal={Ann. of Math.},
   volume={102},
   date={1975},
   number={1},
   pages={95--100},
   review={\MR{0379785 (52 \#690)}},
}

\bib{CorF}{article}{
   author={C\'ordoba, A.},
   author={Fefferman, R.},
   title={On the equivalence between the boundedness of certain classes of
   maximal and multiplier operators in Fourier analysis},
   journal={Proc. Nat. Acad. Sci. U.S.A.},
   volume={74},
   date={1977},
   number={2},
   pages={423--425},
   issn={0027-8424},
   review={\MR{0433117 (55 \#6096)}},
}

\bib{baf}{book}{
   author={Garnett, J. B.},
   title={Bounded analytic functions},
   series={Graduate Texts in Mathematics},
   volume={236},
   edition={1},
   publisher={Springer},
   place={New York},
   date={2007},
   pages={xiv+459},
   isbn={978-0-387-33621-3},
   isbn={0-387-33621-4},
   review={\MR{2261424 (2007e:30049)}},
}

\bib{Gu}{article}{
   author={de Guzm{\'a}n, M.},
   title={Differentiation of integrals in ${\bf R}^{n}$},
   conference={
      title={Measure theory},
      address={Proc. Conf., Oberwolfach},
      date={1975},
   },
   book={
      publisher={Springer},
      place={Berlin},
   },
   date={1976},
   pages={181--185. Lecture Notes in Math., Vol. 541},
   review={\MR{0476978 (57 \#16523)}},
}

\bib{hlp}{article}{
		Author = {Hagelstein, P. A.},
		Author = {Luque, T.},
		Author = {Parissis, I.},
		Eprint = {1304.1015},
		Title = {Tauberian conditions, Muckenhoupt weights, and differentiation properties of weighted bases},
		Url = {http://arxiv.org/abs/1304.1015},
		journal={to appear in Trans. Amer. Math. Soc.},
		Year = {}}

\bib{hs}{article}{
   author={Hagelstein, P. A.},
   author={Stokolos, A.},
   title={Tauberian conditions for geometric maximal operators},
   journal={Trans. Amer. Math. Soc.},
   volume={361},
   date={2009},
   number={6},
   pages={3031--3040},
   issn={0002-9947},
   review={\MR{2485416 (2010b:42023)}},
}

\bib{Solyanik}{article}{
   author={Solyanik, A. A.},
   title={On halo functions for differentiation bases},
   language={Russian, with Russian summary},
   journal={Mat. Zametki},
   volume={54},
   date={1993},
   number={6},
   pages={82--89, 160},
   issn={0025-567X},
   translation={
      journal={Math. Notes},
      volume={54},
      date={1993},
      number={5-6},
      pages={1241--1245 (1994)},
      issn={0001-4346},
   },
   review={\MR{1268374 (95g:42033)}},
   doi={10.1007/BF01209086},
}
\end{biblist}
\end{bibsection}
\end{document}